\newtheorem{theorem}{Theorem}[section]
\newtheorem{corollary}[theorem]{Corollary}
\newtheorem{proposition}[theorem]{Proposition}
\newtheorem{lemma}[theorem]{Lemma}
\newtheorem{definition}[theorem]{Definition}
\newtheorem{remark}[theorem]{Remark}
\def\and{\hbox{ and }}
\let\Sum=\sum \def\sum{\Sum\nolimits}
\def\and{\hbox{ and }}
\def\DONE{*!*}
\def\NextDef #1 {\def\NextOne{#1}
 \ifx\NextOne\DONE\let\next\relax
 \else\expandafter\xdef\csname#1\endcsname{\TheOp}
  \let\next\NextDef
 \fi \next}
\def\TheOp{\mathop{\rm\NextOne}}
\newcommand{\cC}{{\mathcal C}}
\newcommand{\cD}{{\mathcal D}}
\newcommand{\cO}{{\mathcal O}}
\newcommand{\bC}{{\mathbb{C}}}
\newcommand{\bN}{{\mathbb{N}}}
\def\TheOp{\hbox{\rm\NextOne}}
\begin{document}

\title{CATEGORICAL ASPECTS OF THE DOUBLE STRUCTURE OF A MODULE}
\author{Thiago F. da Silva}

\maketitle

\begin{abstract}
{\small In this work we develop some categorical aspects of the double structure of a module, defined in \cite{GT}.}
\end{abstract}

\let\thefootnote\relax\footnote{2010 \textit{Mathematics Subjects Classification} 18B99, 14J17, 13C60
	
	\textit{Key words and phrases.} Double of Modules and Homomorphisms,  Bi-Lipschitz Equisingularity, Homological Algebra}

\section*{Introduction}

The study of bi-Lipschitz equisingularity was started at the end of 1960's with works of Zariski \cite{Za}, Pham \cite{Pham} and Teissier \cite{PT}. At the end of 1980's, Mostowski \cite{M1} introduced a new technique for the study of Bi-Lipschitz equisingularity from the existence of Lipschitz vector fields.

In \cite{G1} Gaffney defined the concept of the double of an ideal and developed the infinitesimal Lipschitz conditions for a family of hypersurfaces using the integral closure of modules, namely, the double of some jacobian ideals. In \cite {G2} Gaffney used the double and the integral closure of modules to get algebraic conditions for bi-Lipschitz equisingularity of a family of irreducible curves. In \cite{SGP} the authors also used the double and the integral closure of ideals to get an algebraic condition in order to get a canonical vector field defined along a Essentially Isolated Determinantal Singularities (EIDS) family, which is Lipschitz provided the matrix of deformation of the 1-unfolding which defines the EIDS is constant.

In \cite{GT} it was extended the notion of the double for modules, and we have generalizations for some results of \cite{G1}, further.

In this work, our main goal is to look the categorical properties of the double structure, under an algebraic viewpoint. In \cite{GT} we develop algebraic conditions, in order to ensure the existence of Lipschitz canonical vector fields, using the double and the integral closure of modules.

In section 1 we define the double homomorphism and we get several results that relates standard properties of a homomorphism and its double.

In section 2 we develop some relations between the homological behavior of chain complexes and its doubles.

In section 3 we see the notion of double in a categorical viewpoint, and we prove that the double category is isomorphic to the categories of the modules that are embedded in a free module with finite rank.

In section 4 we find a quite natural way to define the double in a quotient of a free module with finite rank.

Finally, in section 5 we extend the notion of a double homomorphism between two submodules embedded on finite powers of local rings of possibly different analytic varieties, linked by an analytic map-germ between them.

\section*{Acknowledgements}

The author is grateful to Terence Gaffney for the inspiration and support for this work and to Nivaldo Grulha for his careful reading and valuable suggestions.

The author was supported by Funda\c{c}\~ao de Amparo \`a Pesquisa do Estado de S\~ao Paulo - FAPESP, Brazil, grant 2013/22411-2.

\section{THE DOUBLE HOMOMORPHISM AND BASIC PROPERTIES}\label{sec1}

Let $R$ be a ring. 

\begin{definition}
Let $\mathcal T(R)$ be the category of the $R-$modules $M$ which are $R-$submodules of $R^p$, for some natural number $p$.
\end{definition}

Let $X\subset\bC^n$ be an analytic space and let $\cO_X$ be the analytic sheaf of local rings over $X$, and let $x\in X$.

Here we work on the categories $\mathcal T(\cO_{X,x})$ and $\mathcal T(\cO_{X\times X,(x,x)})$.

It is defined in \cite{GT} the concept of a double of a $\cO_{X,x}-$submodule $M$ of $\cO_{X,x}^p$. We recall the definition now.

Consider the projection maps $\pi_1,\pi_2: X\times X\rightarrow X$.

\begin{definition}
\begin{enumerate}

\item Let $h\in \cO_{X,x}^p$. The {\bf double of $h$} is defined as $$h_D:=(h\circ\pi_1,h\circ\pi_2)\in\cO_{X\times X,(x,x)}^{2p}.$$

\item The {\bf double of $M$} is denoted by $M_D$ and is defined as the $\cO_{X\times X,(x,x)}-$submodule of $\cO_{X\times X,(x,x)}^{2p}$ generated by $\{h_D\mbox{ / }h\in M\}$. 

\end{enumerate}
\end{definition}

\vspace{0,5cm}

The first result is a quite useful tool many times when we work with the double.

\begin{proposition}\label{3.P1}
Let $M,N\subset\cO_{X,x}^p$ submodules and $h,g\in\cO_{X,x}^p$. Then:
\begin{enumerate}

\item[a)] h=g if, and only if, $h_D=g_D$;

\vspace{0,2cm}

\item[b)] $h\in M$ if, and only if, $h_D\in M_D$;

\vspace{0,2cm}

\item[c)] $M\subset N$ if, and only if, $M_D\subset N_D$;

\vspace{0,2cm}

\item[d)] $M=N$ if, and only if, $M_D=N_D$.

\end{enumerate}
\end{proposition}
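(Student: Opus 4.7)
The plan is to prove (a), (b), (c), (d) in sequence, each following easily from the previous with (b) being the only substantive step. The key ingredient throughout is the diagonal map-germ $\delta \colon (X,x) \to (X\times X,(x,x))$ defined by $\delta(y) = (y,y)$, which is a common section of both projections: $\pi_1 \circ \delta = \pi_2 \circ \delta = \mathrm{id}_{(X,x)}$. Consequently, for any $f \in \cO_{X,x}^p$ one has $(f \circ \pi_i) \circ \delta = f$; equivalently, the induced ring map $\delta^*\colon \cO_{X\times X,(x,x)} \to \cO_{X,x}$ is a retraction of each $\pi_i^*$. This is the sole tool I will use to descend identities from the larger ring to the smaller one.

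For (a), the forward implication is immediate from the definition of the double. For the converse, reading the first $p$ components of $h_D = g_D$ gives $h \circ \pi_1 = g \circ \pi_1$, and precomposing with $\delta$ yields $h = g$. For (b), the forward direction is also immediate since $h_D$ is by definition one of the generators of $M_D$. The converse is the one real computation: assuming $h_D \in M_D$, I would write
\begin{equation*}
h_D \;=\; \sum_{i} a_i\,(m_i)_D, \qquad a_i \in \cO_{X\times X,(x,x)},\ m_i \in M,
\end{equation*}
read off the first $p$ coordinates to obtain $h \circ \pi_1 = \sum_i a_i\,(m_i \circ \pi_1)$, and precompose with $\delta$ to conclude that $h = \sum_i (a_i \circ \delta)\,m_i$, exhibiting $h$ as an $\cO_{X,x}$-linear combination of elements of $M$, hence $h \in M$.

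Parts (c) and (d) are then purely formal. For the forward direction of (c), every generator $h_D$ of $M_D$ (with $h \in M \subset N$) is a generator of $N_D$, so $M_D \subset N_D$. For its converse, any $h \in M$ satisfies $h_D \in M_D \subset N_D$, and (b) forces $h \in N$. Part (d) follows by applying (c) to each of the two inclusions between $M$ and $N$. There is no genuine obstacle in the argument; the only point requiring mild attention is the change of scalar ring between $\cO_{X,x}$ and $\cO_{X\times X,(x,x)}$, which is handled uniformly by the restriction-to-the-diagonal map $\delta^*$.
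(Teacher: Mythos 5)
Your proof is correct and follows essentially the same strategy as the paper: descend an identity from $\cO_{X\times X,(x,x)}$ to $\cO_{X,x}$ by composing with a section of $\pi_1$ that sends $x$ to $(x,x)$. The only difference is the choice of section — you use the diagonal $\delta(y)=(y,y)$, while the paper freezes the second coordinate at $x$ (i.e.\ uses the slice $z\mapsto(z,x)$); both yield the same conclusion by the same mechanism.
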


\begin{proof}
(a) The implication ($\Longrightarrow$) is obvious. Suppose that $h_D=g_D$. In particular, $h\circ\pi_1=g\circ\pi_1$, and for all $z$ in a neighborhood of $x$ we have $h(z)=h\circ\pi_1(z,x)=g\circ\pi_1(z,x)=g(z)$, hence $h=g$.

\vspace{0,5cm}

(b) The implication ($\Longrightarrow$) is obvious. Suppose now that $h_D\in M_D$. Then, we can write $$h_D=\sum\alpha_i(g_i)_D$$ with $g_i\in M$ and $\alpha_i\in\cO_{X\times X,(x,x)}$. In particular, $h\circ\pi_1=\sum\alpha_i(g_i\circ\pi_1)$. Taking $\alpha_i^{x}\in\cO_{X,x}$ given by $\alpha_i^{x}(z):=\alpha_i(z,x)$, $\forall i$, we get $h=\sum\alpha_i^{x}g_i$ which belongs to $M$.

\vspace{0,5cm}

(c) The implication ($\Longrightarrow$) is obvious. Suppose that $M_D\subset N_D$. Let $h\in M$ arbitrary. Then $h_D\in M_D\subset N_D$, so by the item (b) we conclude that $h\in N$. Therefore, $M\subset N$.

\vspace{0,5cm}

(d) It is a straightforward consequence of the item (c).

\end{proof}

\begin{corollary}\label{3.C2}
For each $\cO_{X,x}-$submodule $M$ of $\cO_{X,x}^p$, the natural map
$$
\begin{matrix}
D_M:  & M & \longrightarrow & M_D \\
          &  h  & \longmapsto    & h_D
\end{matrix}$$ is an injective group homomorphism. In particular, we can see $M$ as an additive subgroup of $M_D$.
\end{corollary}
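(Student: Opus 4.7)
The plan is to verify the three required properties of $D_M$ in sequence: well-definedness, homomorphism, and injectivity. Well-definedness is immediate from the construction: since $M_D$ is by definition the $\cO_{X\times X,(x,x)}$-submodule generated by $\{h_D : h \in M\}$, every element of the form $h_D$ with $h \in M$ already lies in $M_D$, so the assignment $h \mapsto h_D$ lands in $M_D$ as required.

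Next I would verify that $D_M$ respects the additive structure. Given $h, g \in M$, the definition of the double gives
\begin{equation*}
(h+g)_D = \bigl((h+g)\circ\pi_1,\,(h+g)\circ\pi_2\bigr).
\end{equation*}
Because composition with the projections $\pi_1, \pi_2$ distributes over sums of $\cO_{X,x}^p$-valued functions componentwise, this equals
\begin{equation*}
\bigl(h\circ\pi_1 + g\circ\pi_1,\, h\circ\pi_2 + g\circ\pi_2\bigr) = h_D + g_D,
\end{equation*}
so $D_M$ is a group homomorphism. (Nothing stronger than additivity is claimed, and indeed $D_M$ is not $\cO_{X,x}$-linear in any obvious way, because multiplication by a scalar $\alpha \in \cO_{X,x}$ on $M$ does not correspond to multiplication by $\alpha$ on $M_D$ but rather to multiplication by $\alpha\circ\pi_1$ on each pair of coordinates, which differs from the natural $\cO_{X\times X,(x,x)}$-action.)

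Finally, injectivity is precisely the content of Proposition \ref{3.P1}(a): if $D_M(h) = D_M(g)$, that is $h_D = g_D$, then $h = g$. Equivalently, the kernel of $D_M$ is trivial, since $D_M(h) = 0 = 0_D$ forces $h = 0$. The combination of these three facts yields an injective group homomorphism $D_M : M \hookrightarrow M_D$, and the final assertion that $M$ is an additive subgroup of $M_D$ follows by identifying $M$ with its image $D_M(M)$. I do not anticipate a genuine obstacle here: every step either reduces to the definition of the double or invokes part (a) of the preceding proposition, and the only subtle point worth flagging in the writeup is the distinction between the additive and the module-theoretic structures, as noted above.
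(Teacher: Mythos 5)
Your argument is correct and follows the same route as the paper: additivity of $D_M$ is read off directly from the definition of the double, and injectivity is exactly Proposition \ref{3.P1}(a). You simply spell out the additivity computation (and the aside about non-$\cO_{X,x}$-linearity) that the paper leaves as immediate.
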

\begin{proof}
It is a straightforward consequence of the definition of the double that $D_M$ is a group homomorphism. The Proposition \ref{3.P1} (a) gives the injectivity.
\end{proof}

Our main goal is to give a categorical sense for the double structure. The next theorem is the key for it.

\begin{theorem}\label{3.T3}
Let $M\subset\cO_{X,x}^{p}$ and $N\subset\cO_{X,x}^q$ be $\cO_{X,x}-$submodules. If $\phi: M\rightarrow N$ is an $\cO_{X,x}-$module homomorphism then there exists a unique $\cO_{X\times X,(x,x)}-$module homomorphism $\phi_D: M_D\rightarrow N_D$ such that $\phi_D(h_D)=(\phi(h))_D$, $\forall h\in M$, i.e, the following diagram commutes:

$$\begin{matrix}
       &                           &  \phi                  &                           &           \\
       & M                       &  \longrightarrow  & N                  &            \\
D_M & \downarrow &                           & \downarrow & D_N \\
       & M_D                   &  \dashrightarrow  & N_D                   &              \\     
       &                           &  \phi_D                  &                           &                                        
\end{matrix}$$

The map $\phi_D$ is called the double of $\phi$.
\end{theorem}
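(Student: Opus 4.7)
The plan is standard in shape: uniqueness is forced by a generating set, and existence is a well-definedness check. Uniqueness is formal, because $M_D$ is by definition the $\cO_{X\times X,(x,x)}$-submodule of $\cO_{X\times X,(x,x)}^{2p}$ generated by $\{h_D:h\in M\}$, so any $\cO_{X\times X,(x,x)}$-linear map on $M_D$ is determined by its values on these generators, and the prescription $\phi_D(h_D)=(\phi(h))_D$ pins them down. For existence I would define, on an arbitrary element $\sum_i\alpha_i(h_i)_D\in M_D$ (with $\alpha_i\in\cO_{X\times X,(x,x)}$ and $h_i\in M$),
\[
\phi_D\!\left(\sum_i\alpha_i(h_i)_D\right):=\sum_i\alpha_i\,(\phi(h_i))_D,
\]
after which $\cO_{X\times X,(x,x)}$-linearity, the commutativity of the diagram, and the identity on generators are all automatic; the entire content is to show that $\sum_i\alpha_i(h_i)_D=0$ forces $\sum_i\alpha_i(\phi(h_i))_D=0$.

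To prove this, I would unpack the hypothesis in coordinates on a joint neighborhood $U\times V$ of $(x,x)$ to obtain the two scalar identities $\sum_i\alpha_i(z,w)\,h_i(z)\equiv 0$ and $\sum_i\alpha_i(z,w)\,h_i(w)\equiv 0$, and then treat $w$ as a parameter. For each fixed $w_0\in V$, the first identity says $\sum_i\alpha_i^{w_0}h_i=0$ in $M$, where $\alpha_i^{w_0}(z):=\alpha_i(z,w_0)\in\cO_{X,x}$. Applying the $\cO_{X,x}$-linear map $\phi$ yields $\sum_i\alpha_i^{w_0}\phi(h_i)=0$ in $N$, so the holomorphic function $F(z,w):=\sum_i\alpha_i(z,w)\,\phi(h_i)(z)$ on $U\times V$ has $F(\cdot,w_0)\equiv 0$ near $x$ for every $w_0\in V$. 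Expanding $F$ as a power series in $z$ at $x$ with coefficients holomorphic in $w$ and invoking the identity principle forces $F\equiv 0$ on $U\times V$, i.e., $\sum_i\alpha_i\cdot(\phi(h_i)\circ\pi_1)=0$. The symmetric argument using the second identity gives $\sum_i\alpha_i\cdot(\phi(h_i)\circ\pi_2)=0$, and together these yield $\sum_i\alpha_i(\phi(h_i))_D=0$, as required.

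The main obstacle is precisely this analytic upgrade from ``true for each fixed $w_0$'' to ``true on a common neighborhood of $(x,x)$.'' A purely algebraic argument does not suffice, because $\phi$ need not be the restriction of any $\cO_{X,x}$-linear map defined on the ambient $\cO_{X,x}^p$; it is the holomorphy of the parameters $\alpha_i$ on $U\times V$ that lets us glue the slicewise germ identities into a single germ identity at $(x,x)$.
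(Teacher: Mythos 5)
Your proof is correct and follows the same route as the paper: define $\phi_D$ on generators via $\sum_i\alpha_i(h_i)_D\mapsto\sum_i\alpha_i(\phi(h_i))_D$, get uniqueness for free since $\{h_D:h\in M\}$ generates $M_D$, and reduce existence to the well-definedness check, which both you and the paper settle by slicing at each fixed $w$, applying $\phi$ to the resulting $\cO_{X,x}$-relation, and reassembling the slicewise germ identities into a single germ identity at $(x,x)$. Your remark that this reassembly is the genuine analytic content is apt; note only that the power-series-in-$z$ phrasing is literal when $X$ is smooth at $x$, whereas for a singular germ one should instead apply the identity theorem for analytic sets on a fixed small connected representative neighborhood of $x$ in $X$ (so that ``germ zero at $x$'' becomes ``function zero on $X\cap U$'' for a $U$ independent of $w$), though the paper glosses this step with the same brevity.
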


\begin{proof}
Since $M_D$ is generated by $\{h_D \mbox{ / }h\in M\}$ then we can define $\phi_D: M_D\rightarrow N_D$ in a quite natural way: for each $u=\sum\limits_i\alpha_i(h_i)_D$ with $\alpha_i\in\cO_{X\times X,(x,x)}$ and $h_i\in M$ we define $$\phi_D(u):=\sum\limits_i\alpha_i(\phi(h_i))_D$$ which belongs to $N_D$.

{\bf Claim : $\phi_D$ is well defined.}
In fact, suppose that $\sum\limits_i\alpha_i(h_i)_D=\sum\limits_j\beta_j(g_j)_D$, with $\alpha_i,\beta_j\in\cO_{X\times X,(x,x)}$ and $h_i,g_j\in M$. So, we get two equations:

$$\sum\limits_i\alpha_i(h_i\circ\pi_1)=\sum\limits_j\beta_j(g_j\circ\pi_1) \eqno (1)$$

$$\sum\limits_i\alpha_i(h_i\circ\pi_2)=\sum\limits_j\beta_j(g_j\circ\pi_2). \eqno (2)$$

Take $U$ an open neighborhood of $x$ in $X$ where $\alpha_i,\beta_j$ are defined on $U\times U$, and $h_i,g_j$ are defined on $U$. For each $w\in U$ define $\alpha_i^w,\beta_j^w\in\cO_{X,x}$ given by the germs of the maps 
$$\begin{matrix}
\alpha_i^w: & U  &  \longrightarrow  &  \bC                 &  \mbox{          }  &  \beta_j^w:  &  U  &  \longrightarrow  &  \bC  \\
                   &  z  &  \longmapsto      &  \alpha_i(z,w)  &  \mbox{          }  &                   &  z   &  \longrightarrow  &  \beta_j(z,w)
\end{matrix}$$

\vspace{0,3cm}

The equation (1) implies that $\sum\limits_i\alpha_i^{w}h_i=\sum\limits_j\beta_j^{w}g_j$, $\forall w \in U$. Applying $\phi$ (which is an $\cO_{X,x}-$homomorphism) in both sides of the last equation we get $\sum\limits_i\alpha_i^{w}\phi(h_i)=\sum\limits_j\beta_j^{w}\phi(g_j)$, $\forall w\in U$. This implies that $$(\sum\limits_i\alpha_i(\phi(h_i)\circ\pi_1))(z,w)=(\sum\limits_j\beta_j(\phi(g_j)\circ\pi_1))(z,w)$$ $\forall (z,w)\in U\times U$, hence $$\sum\limits_i\alpha_i(\phi(h_i)\circ\pi_1)=\sum\limits_j\beta_j(\phi(g_j)\circ\pi_1). \eqno (3)$$

Analogously, using the equation (2), we get $$\sum\limits_i\alpha_i(\phi(h_i)\circ\pi_2)=\sum\limits_j\beta_j(\phi(g_j)\circ\pi_2). \eqno (4)$$
The equations (3) and (4) implies that $$\sum\limits_i\alpha_i(\phi(h_i))_D=\sum\limits_j\beta_j(\phi((g_j))_D$$ and the Claim is proved.

Now, by the definition of $\phi_D$, it is clear that $\phi_D$ is an $\cO_{X\times X,(x,x)}-$module homomorphism and is the unique satisfying the property $\phi_D(h_D)=(\phi(h))_D$, $\forall h\in M$.
\end{proof}

\vspace{0,5cm}

From now on, all the modules are objects in $\mathcal T(\cO_{X,x})$ and their doubles are objects in $\mathcal T(\cO_{X\times X,(x,x)})$. 

\vspace{1cm}

Notice that if $id_M:M\rightarrow M$ and $id_{M_D}:M_D\rightarrow M_D$ are the identidy homomorphisms of $M$ and $M_D$, then $$(id_M)_D=id_{M_D}.$$

The next proposition gives us a relation between images and kernels of a module homomorphism.

\begin{proposition}\label{3.P4}
Let $\phi:M\rightarrow N$ be an $\cO_{X,x}-$module homomorphism and \\$\phi_D:M_D\rightarrow N_D$ its double. Then:
\begin{enumerate}
\item[a)] $Im(\phi_D)=(Im (\phi))_D$;

\vspace{0,2cm}

\item[b)] $(ker(\phi))_D\subset Ker(\phi_D)$.
\end{enumerate}
\end{proposition}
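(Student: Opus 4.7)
The plan is to prove both statements directly from the definition of $\phi_D$ given in Theorem \ref{3.T3}, using that the double of a module is generated by the doubles of its elements.

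For part (a), I would argue by double inclusion. For $\text{Im}(\phi_D) \subseteq (\text{Im}(\phi))_D$, I take an arbitrary element $\phi_D(u) \in \text{Im}(\phi_D)$, write $u = \sum_i \alpha_i (h_i)_D$ with $h_i \in M$ and $\alpha_i \in \cO_{X\times X,(x,x)}$, and apply the defining formula $\phi_D(u) = \sum_i \alpha_i (\phi(h_i))_D$; since each $\phi(h_i) \in \text{Im}(\phi)$, the sum lies in $(\text{Im}(\phi))_D$. For the reverse inclusion, I start with $v = \sum_i \alpha_i (y_i)_D \in (\text{Im}(\phi))_D$ and pick preimages $h_i \in M$ with $\phi(h_i) = y_i$; then $v = \phi_D\bigl(\sum_i \alpha_i (h_i)_D\bigr)$, which is visibly in $\text{Im}(\phi_D)$.

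For part (b), it suffices by definition of $(\ker(\phi))_D$ to check that every generator $h_D$ with $h \in \ker(\phi)$ lies in $\ker(\phi_D)$, since $\ker(\phi_D)$ is an $\cO_{X\times X,(x,x)}$-submodule of $M_D$. But $\phi_D(h_D) = (\phi(h))_D = 0_D = 0$, so the generators lie in the kernel and the claim follows.

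I do not expect any real obstacle: both statements are essentially formal consequences of the commutation $\phi_D(h_D) = (\phi(h))_D$ together with $\cO_{X\times X,(x,x)}$-linearity of $\phi_D$. The only point to be slightly careful about is, in part (a), noting that $(\text{Im}(\phi))_D$ is already closed under $\cO_{X\times X,(x,x)}$-linear combinations so no further manipulation is needed. It is worth remarking here (in anticipation of later sections) that the inclusion in (b) is in general strict, because elements of $M_D$ need not be doubles of single elements of $M$, so the kernel of $\phi_D$ may contain genuine $\cO_{X\times X,(x,x)}$-linear combinations $\sum_i \alpha_i (h_i)_D$ which map to zero without each $h_i$ lying in $\ker(\phi)$.
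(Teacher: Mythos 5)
Your proof is correct and follows essentially the same approach as the paper: both inclusions in (a) come from the defining formula $\phi_D(h_D)=(\phi(h))_D$ together with $\cO_{X\times X,(x,x)}$-linearity, and (b) reduces to checking generators. You are merely a bit more explicit than the paper in writing out arbitrary $\cO_{X\times X,(x,x)}$-linear combinations; the paper instead verifies the claim on the generating set $\{g_D : g\in Im(\phi)\}$ and invokes that $Im(\phi_D)$ is a submodule, which amounts to the same thing.
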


\begin{proof}
(a) By definition of $\phi_D$, it is clear that $Im(\phi_D)\subset (Im(\phi))_D$. Now, if $g\in Im(\phi)$ then we can write $g=\phi(h)$, for some $h\in M$. So, $g_D=(\phi(h))_D=\phi_D(h_D)\in Im(\phi_D)$. Thus, $g_D\in Im(\phi_D)$, $\forall g\in Im(\phi)$, hence $(Im(\phi))_D\subset Im(\phi_D)$.

\vspace{0,5cm}

(b) For every $h\in Ker(\phi)$, we have that $\phi_D(h_D)=(\phi(h))_D=(0_N)_D=0_{N_D}$, so $h_D\in Ker(\phi_D)$.
\end{proof}

The next proposition shows that the double homomorphism has a good behavior with respect to sum and composition.

\begin{proposition}\label{3.P9}
Let $\phi,\phi':M\rightarrow N$ and $\gamma:N\rightarrow P$ be $\cO_{X,x}-$module homomorphisms.
\begin{enumerate}

\item[a)] $\phi=\phi' \iff \phi_D=\phi'_D$;

\vspace{0,2cm}

\item[b)] $(\gamma\circ\phi)_D=\gamma_D\circ\phi_D$;

\vspace{0,2cm}

\item[c)] $(\phi+\phi')_D=\phi_D+\phi'_D$.

\end{enumerate}
\end{proposition}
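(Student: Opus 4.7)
The plan is to exploit the uniqueness clause of Theorem \ref{3.T3}: any $\cO_{X\times X,(x,x)}$-module homomorphism $M_D \to N_D$ that sends each $h_D$ to $(\phi(h))_D$ must coincide with $\phi_D$. So for each of the three identities, I will verify that the right-hand side is an $\cO_{X\times X,(x,x)}$-module homomorphism acting in the prescribed way on generators of the form $h_D$, and uniqueness will close the argument.

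For part (a), the forward implication is immediate from the construction of $\phi_D$. For the converse, assume $\phi_D=\phi'_D$ and fix an arbitrary $h\in M$. Applying both sides to $h_D$ yields $(\phi(h))_D=\phi_D(h_D)=\phi'_D(h_D)=(\phi'(h))_D$, and then Proposition \ref{3.P1}(a) forces $\phi(h)=\phi'(h)$. As $h$ was arbitrary, $\phi=\phi'$.

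For part (b), the composition $\gamma_D\circ\phi_D$ is manifestly an $\cO_{X\times X,(x,x)}$-module homomorphism from $M_D$ to $P_D$. On a generator $h_D$ with $h\in M$ one computes
\[
(\gamma_D\circ\phi_D)(h_D)=\gamma_D\bigl((\phi(h))_D\bigr)=(\gamma(\phi(h)))_D=((\gamma\circ\phi)(h))_D,
\]
which is precisely the property that characterises $(\gamma\circ\phi)_D$. The uniqueness part of Theorem \ref{3.T3} then gives $(\gamma\circ\phi)_D=\gamma_D\circ\phi_D$.

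For part (c), the sum $\phi_D+\phi'_D$ is again an $\cO_{X\times X,(x,x)}$-module homomorphism $M_D\to N_D$. Evaluating on $h_D$ and using that $D_N$ is additive (Corollary \ref{3.C2}), one obtains
\[
(\phi_D+\phi'_D)(h_D)=(\phi(h))_D+(\phi'(h))_D=(\phi(h)+\phi'(h))_D=((\phi+\phi')(h))_D,
\]
so uniqueness again yields $(\phi+\phi')_D=\phi_D+\phi'_D$. There is no genuine obstacle in any of the three items; the only point demanding care is to recall that the defining property $\psi_D(h_D)=(\psi(h))_D$ pins down $\psi_D$ uniquely, since otherwise item (a) and the uniqueness appeals in (b) and (c) would have nothing to rest on.
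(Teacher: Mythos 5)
Your proposal is correct and takes essentially the same route as the paper: verify each identity on the generators $h_D$ of $M_D$, invoke Proposition \ref{3.P1}(a) for the converse of item (a), and then conclude because a homomorphism out of $M_D$ is determined by its values on these generators. The only cosmetic difference is that you frame the last step explicitly as an appeal to the uniqueness clause of Theorem \ref{3.T3}, whereas the paper phrases it directly as ``$M_D$ is generated by the $h_D$''; these are the same argument.
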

\begin{proof}
(a) $(\Longrightarrow)$ Suppose $\phi=\phi'$. For all $h\in M$, $\phi_D(h_D)=(\phi(h))_D=(\phi'(h))_D=\phi'_D(h_D)$. Since the module $M_D$ is generated by $\{h_D\mbox{ / }h\in M\}$ then $\phi_D=\phi'_D$.

\vspace{0,2cm}

$(\Longleftarrow)$ Suppose $\phi_D=\phi'_D$. Given $h\in M$ arbitrary, we have $(\phi(h))_D=\phi_D(h_D)=\phi'_D(h_D)=(\phi'(h))_D$. By Proposition \ref{3.P1} (a), $\phi(h)=\phi'(h)$. Hence, $\phi=\phi'$.

\vspace{0,2cm}

(b) For every $h\in M$ we have $(\gamma\circ\phi)_D(h_D)=(\gamma\circ\phi(h))_D=\gamma_D((\phi(h))_D)=\gamma_D\circ\phi_D(h_D)$, which proves (b).

\vspace{0,2cm}

(c) For every $h\in M$ we have $(\phi+\phi')_D(h_D)=((\phi+\phi')(h))_D=(\phi(h)+\phi'(h))_D=(\phi(h))_D+(\phi'(h))_D=(\phi_D+\phi'_D)(h_D)$, which proves (c).

\end{proof}

\begin{corollary}\label{3.C5}
Let $\phi: M\rightarrow N$ be an $\cO_{X,x}-$module homomorphism. Then:
\begin{enumerate}
\item[a)] $\phi: M\rightarrow N$ is surjective if, and only if, $\phi_D:M_D\rightarrow N_D$ is a surjective;

\vspace{0,2cm}

\item[b)] If $\phi_D:M_D\rightarrow N_D$ is injective then $\phi: M\rightarrow N$ is injective;

\vspace{0,2cm}

\item[c)] $\phi:M\rightarrow N$ is an $\cO_{X,x}-$isomorphism if, and only if, $\phi_D: M_D\rightarrow N_D$ is an $\cO_{X\times X,(x,x)}-$isomorphism;

\vspace{0,2cm} 

\item[d)] $\phi: M\rightarrow N$ is the zero homomorphism if, and only if, $\phi_D:M_D\rightarrow N_D$ is the zero homomorphism.

\end{enumerate}
\end{corollary}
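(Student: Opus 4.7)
The plan is to derive all four parts from the propositions already proved in this section, with no need to return to the explicit generators of the double. The main tools will be Proposition \ref{3.P4} (which tracks images and kernels under $(-)_D$), Proposition \ref{3.P9} (compatibility of the double with composition, sum, and equality of homomorphisms), parts (a) and (d) of Proposition \ref{3.P1}, together with the identity $(id_M)_D=id_{M_D}$ recorded between Theorem \ref{3.T3} and Proposition \ref{3.P4}.

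For (a), surjectivity of $\phi$ is the statement $Im(\phi)=N$, which by Proposition \ref{3.P1}(d) is equivalent to $(Im(\phi))_D=N_D$. Proposition \ref{3.P4}(a) identifies the left-hand side with $Im(\phi_D)$, so this is exactly surjectivity of $\phi_D$. For (b), if $\phi_D$ is injective then Proposition \ref{3.P4}(b) gives $(Ker(\phi))_D\subset Ker(\phi_D)=0$; consequently every $h\in Ker(\phi)$ satisfies $h_D=0_D$, and Proposition \ref{3.P1}(a) forces $h=0$.

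For (c), the $(\Rightarrow)$ direction is pure functoriality: given a two-sided inverse $\psi$ of $\phi$, Proposition \ref{3.P9}(b) combined with $(id)_D=id$ yields $\phi_D\circ\psi_D=id_{N_D}$ and $\psi_D\circ\phi_D=id_{M_D}$, so $\psi_D$ inverts $\phi_D$. The $(\Leftarrow)$ direction follows at once by applying (a) and (b) to an $\cO_{X\times X,(x,x)}$-isomorphism $\phi_D$. For (d), the zero homomorphism $0\colon M\to N$ satisfies $0_D(h_D)=(0(h))_D=0_{N_D}$ on every generator $h_D$ of $M_D$, so its double is the zero map on $M_D$; Proposition \ref{3.P9}(a) applied with $\phi'=0$ then gives the equivalence.

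I do not expect any real obstacle — the statement is essentially a packaging of earlier results through the functoriality of $(-)_D$. The only mildly subtle point is that only one direction of (b) is claimed: the converse, that an injective $\phi$ yields an injective $\phi_D$, does not follow formally from Proposition \ref{3.P4}(b), and would require an argument in the spirit of the well-definedness proof of Theorem \ref{3.T3}, specializing the second variable $w$ to reduce an equation on $X\times X$ to one on $X$ and then invoking injectivity of $\phi$. This stronger equivalence is not needed here, since the one implication already suffices to deduce (c) via the surjective and injective parts in (a) and (b).
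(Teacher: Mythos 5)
Your proof is correct and follows essentially the same route as the paper: (a), (b) and (c) use exactly the same combination of Propositions \ref{3.P1}, \ref{3.P4} and \ref{3.P9} together with $(id_M)_D=id_{M_D}$. The only (harmless) deviation is in (d), where you invoke Proposition \ref{3.P9}(a) with $\phi'=0$ after checking that the double of the zero map is zero, whereas the paper argues via $Im(\phi)=0_N$ and Propositions \ref{3.P1}(d), \ref{3.P4}(a); both are equally valid, and your closing remark about the unproved converse of (b) is accurate and does not affect the statement.
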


\begin{proof}
(a) By Propositions \ref{3.P1} (d) and \ref{3.P4} (a) we have: $\phi$ is surjective $\iff$ $Im(\phi)=N$ $\iff$  $(Im(\phi))_D=N_D$  $\iff$ $Im(\phi_D)=N_D$ $\iff$ $\phi_D$ is surjective. 

\vspace{0,5cm}

(b) If $\phi_D$ is injective then $Ker(\phi_D)=0_{M_D}$, and the Proposition \ref{3.P4} (b) implies that $(Ker(\phi))_D\subset Ker(\phi_D)=0_{M_D}=(0_M)_D$. By Proposition \ref{3.P1} (c) we conclude that $Ker(\phi)=0_M$, hence $\phi$ is injective.
\end{proof}

\vspace{0,5cm}

(c) $(\Longrightarrow)$ Since $\phi:M\rightarrow N$ is an isomorphism then there exists an $\cO_{X,x}-$homomorphism $\gamma:N\rightarrow M$ such that $\gamma\circ\phi=id_M$ and $\phi\circ\gamma=id_N$, then, by Proposition \ref{3.P9} (b) we have $(\gamma)_D\circ(\phi)_D=id_{M_D}$ and $(\phi)_D\circ(\gamma)_D=id_{N_D}$. Hence, $\phi_D$ is an isomorphism.

\vspace{0,5cm}

$(\Longleftarrow)$It follows immediately from (a) and (b).

\vspace{0,5cm}

(d) By Propositions \ref{3.P1} (d) and \ref{3.P4} (a) we have: $\phi$ is the zero homomorphism $\iff$ $Im(\phi)=0_N$ $\iff$  $(Im(\phi))_D=(0_N)_D$  $\iff$ $Im(\phi_D)=0_{N_D}$ $\iff$ $\phi_D$ is the zero homomorphism. 

\begin{definition}
We say that an $\cO_{X,x}-$homomorphism $\phi:M\subset\cO_{X,x}^p\rightarrow N\subset\cO_{X,x}^q$ is induced by a $q\times p$ matrix if there exists $A\in Mat_{q\times p}(\cO_{X,x})$ such that $\phi(h)=A\cdot h$, $\forall h\in M$.
\end{definition}

\begin{lemma}\label{3.L19}
An $\cO_{X,x}-$homomorphism $\phi:M\subset\cO_{X,x}^p\rightarrow N\subset\cO_{X,x}^q$ is induced by a $q\times p$ matrix if, and only if, there exists an $\cO_{X,x}-$homomorphism $\tilde{\phi}:\cO_{X,x}^p\rightarrow\cO_{X,x}^q$ such that $\tilde{\phi}(M)\subset N$ and $\tilde{\phi}\mid_{M}=\phi$.
\end{lemma}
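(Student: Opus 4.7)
The plan is to prove the two implications separately, the key observation being that homomorphisms out of a free module of finite rank are in bijection with matrices.

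For the implication $(\Longrightarrow)$, suppose $\phi$ is induced by a matrix $A\in Mat_{q\times p}(\cO_{X,x})$, so $\phi(h)=A\cdot h$ for every $h\in M$. I would simply define $\tilde\phi\colon\cO_{X,x}^p\rightarrow\cO_{X,x}^q$ by the formula $\tilde\phi(v):=A\cdot v$. This is clearly an $\cO_{X,x}$-homomorphism on the whole free module, its restriction to $M$ agrees with $\phi$ by construction, and in particular $\tilde\phi(M)=\phi(M)\subset N$.

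For the converse $(\Longleftarrow)$, assume such an extension $\tilde\phi\colon\cO_{X,x}^p\rightarrow\cO_{X,x}^q$ exists. Let $e_1,\ldots,e_p$ be the canonical basis of $\cO_{X,x}^p$, and for each $j\in\{1,\ldots,p\}$ write $\tilde\phi(e_j)$ as a column vector in $\cO_{X,x}^q$. Arranging these columns, I would obtain a matrix $A\in Mat_{q\times p}(\cO_{X,x})$ with $A\cdot e_j=\tilde\phi(e_j)$. Since $\tilde\phi$ is $\cO_{X,x}$-linear, for any $v=\sum_j v_j e_j\in\cO_{X,x}^p$ we get $\tilde\phi(v)=\sum_j v_j\tilde\phi(e_j)=A\cdot v$. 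Restricting to $h\in M$ and using $\tilde\phi\mid_M=\phi$ yields $\phi(h)=A\cdot h$, so $\phi$ is induced by $A$.

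There is no real obstacle here; the statement is essentially the standard fact that $\Hom_{\cO_{X,x}}(\cO_{X,x}^p,\cO_{X,x}^q)\cong Mat_{q\times p}(\cO_{X,x})$, together with the trivial observation that the defining formula $h\mapsto A\cdot h$ makes sense on all of $\cO_{X,x}^p$. The only thing worth being careful about is that in the forward direction one must verify that the extension $\tilde\phi$ really does map $M$ into $N$, which is immediate because on $M$ its values coincide with those of $\phi$, which already lie in $N$ by hypothesis.
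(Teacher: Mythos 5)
Your proof is correct and follows essentially the same approach as the paper: the forward direction extends $\phi$ by the formula $v\mapsto A\cdot v$, and the converse reads off the matrix from the images of the canonical basis vectors under $\tilde\phi$. In fact your write-up is slightly more careful than the paper's, which in the converse direction says the columns of $A$ are $\phi(e_1),\dots,\phi(e_p)$ even though the $e_j$ need not lie in $M$; you correctly use $\tilde\phi(e_j)$.
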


\begin{proof}
$(\Longrightarrow)$ By hypothesis there exists a $q\times p$ matrix $A$ with entries in $\cO_{X,x}$ such that $\phi(h)=A\cdot h$, $\forall h\in M$. From this matrix $A$, we can define $\tilde{\phi}:\cO_{X,x}^p\rightarrow\cO_{X,x}^q$ given by $\tilde{\phi}(g):=A\cdot g$, which is an $\cO_{X,x}-$homomorphism. Clearly, $\tilde{\phi}\mid_{M}=\phi$, and for all $h\in M$ we have $\tilde{\phi}(h)=\phi(h)\in N$, so $\tilde{\phi}(M)\subset N$.

\vspace{0,5cm}

$(\Longleftarrow)$ Let $e_1,...,e_p$ be the canonical elements in $\cO_{X,x}^p$. Let $A$ be the $q\times p$ matrix whose columns are $\phi(e_1),...,\phi(e_p)$. Then $\tilde{\phi}(g)=A\cdot g,\forall g\in \cO_{X,x}^p$. Since $\tilde{\phi}\mid_{M}=\phi$ then $\phi(h)=\tilde{\phi}(h)=A\cdot h,\forall h\in M$. Therefore, $\phi$ is induced by a $q\times p$ matrix.
\end{proof}

\begin{proposition}\label{3.P20}
If $\phi:M\subset\cO_{X,x}^p\rightarrow N\subset\cO_{X,x}^q$ is an $\cO_{X,x}-$homomorphism induced by a $q\times p$ matrix then $$\phi_D:M_D\subset\cO_{X\times X,(x,x)}^{2p}\rightarrow N_D\subset\cO_{X\times X,(x,x)}^{2q}$$ is an $\cO_{X\times X,(x,x)}-$homomorphism induced by a $2q\times 2p$ matrix.
\end{proposition}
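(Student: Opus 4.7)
The plan is to apply Lemma \ref{3.L19} in both directions. The forward direction applied to $\phi$ produces a $q \times p$ matrix $A$ with entries in $\cO_{X,x}$ such that $\phi(h) = A \cdot h$ for every $h \in M$. The natural candidate for a matrix inducing $\phi_D$ is the block-diagonal $2q \times 2p$ matrix
\[
B := \begin{pmatrix} A \circ \pi_1 & 0 \\ 0 & A \circ \pi_2 \end{pmatrix},
\]
whose entries lie in $\cO_{X \times X,(x,x)}$ (where $A \circ \pi_i$ denotes the matrix whose entries are obtained by composing each entry of $A$ with $\pi_i$). The task is then to verify that $\phi_D(u) = B \cdot u$ for every $u \in M_D$.

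Matrix multiplication by $B$ defines an $\cO_{X \times X,(x,x)}$-module homomorphism $\widetilde{\phi_D}: \cO_{X \times X,(x,x)}^{2p} \to \cO_{X \times X,(x,x)}^{2q}$. Since $M_D$ is generated as such a module by the elements $\{h_D : h \in M\}$, it is enough to check the identity $\phi_D(h_D) = B \cdot h_D$ on these generators and then invoke linearity. Viewing $h_D$ in block form as $(h \circ \pi_1, h \circ \pi_2)^T$, a direct computation yields
\[
B \cdot h_D = \begin{pmatrix} (A \circ \pi_1)(h \circ \pi_1) \\ (A \circ \pi_2)(h \circ \pi_2) \end{pmatrix} = \begin{pmatrix} (Ah) \circ \pi_1 \\ (Ah) \circ \pi_2 \end{pmatrix} = (\phi(h))_D = \phi_D(h_D),
\]
where the middle equality is the entrywise identity $(a \circ \pi_i)(b \circ \pi_i) = (ab) \circ \pi_i$ summed over the appropriate index, and the last equality is the defining property of $\phi_D$ from Theorem \ref{3.T3}.

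Having verified the identity on generators, $\widetilde{\phi_D}$ restricts to $\phi_D$ on $M_D$ and carries $M_D$ into $N_D$. The backward direction of Lemma \ref{3.L19} then concludes that $\phi_D$ is induced by the $2q \times 2p$ matrix $B$. The only delicate point I anticipate is purely organizational, namely keeping the block indexing consistent so that the first $p$ (respectively, last $p$) coordinates of $h_D$ correspond to $h \circ \pi_1$ (respectively, $h \circ \pi_2$), and similarly for $\phi_D(h_D)$; no substantive obstacle arises.
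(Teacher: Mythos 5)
Your proof is correct and follows essentially the same route as the paper: the same block-diagonal matrix $B$ built from $A \circ \pi_1$ and $A \circ \pi_2$, and the same verification of $\phi_D(h_D) = B \cdot h_D$ on the generators $h_D$ followed by the observation that these generate $M_D$. The explicit appeal to Lemma~\ref{3.L19} in both directions is a slightly more formal packaging, but it is not a different argument.
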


\begin{proof}
By hypothesis there exists a $q\times p$ matrix $A$ with entries in $\cO_{X,x}$ such that $\phi(h)=A\cdot h$, $\forall h\in M$. Then, for all $h\in M$ we have $\phi_D(h_D)=
\begin{bmatrix}
\phi(h)\circ\pi_1 \\
\phi(h)\circ\pi_2
\end{bmatrix}=
\begin{bmatrix}
(A\cdot h)\circ\pi_1 \\
(A\cdot h)\circ\pi_2
\end{bmatrix}
=
\begin{bmatrix}
(A\circ\pi_1)\cdot(h\circ\pi_1) \\
(A\circ\pi_2)\cdot(h\circ\pi_2)
\end{bmatrix}$.

\vspace{0,2cm}

So, taking the $2q\times 2p$ matrix 
$$B:= \begin{bmatrix}
A\circ\pi_1  &  0_{q\times p} \\
0_{q\times p} & A\circ\pi_2
\end{bmatrix}$$
we conclude that $\phi_D(h_D)=B\cdot h_D$, and the proposition is proved, once $M_D$ is generated by $h_D$, $h\in M$.
\end{proof}

As an application of the double homomorphism, we prove in the next theorem that the double structure is compatible with finite direct sum of modules.

\begin{theorem}\label{3.T21}
Let $M\subset\cO_{X,x}^p$ and $N\subset\cO_{X,x}^q$ be $\cO_{X,x}-$submodules. Then $$(M\oplus N)_D\cong M_D\oplus N_D$$ as $\cO_{X\times X,(x,x)}-$submodules of $\cO_{X\times X,(x,x)}^{2(p+q)}$. 

Furthermore, there exists an isomorphism $$\eta:(M\oplus N)_D\longrightarrow M_D\oplus N_D$$ such that $\eta((h,g)_D)=(h_D,g_D)$, for all $h\in M$ and $g\in N$.
\end{theorem}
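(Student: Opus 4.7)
The plan is to construct $\eta$ and its inverse by doubling the canonical projections and inclusions of the direct sum, and then appealing to Theorem~\ref{3.T3}.

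First I would set $\pi_M\colon M\oplus N\to M$, $\pi_N\colon M\oplus N\to N$ to be the canonical projections and $i_M\colon M\to M\oplus N$, $i_N\colon N\to M\oplus N$ the canonical inclusions; all four are $\cO_{X,x}$-module homomorphisms, so Theorem~\ref{3.T3} supplies doubles $(\pi_M)_D,\,(\pi_N)_D,\,(i_M)_D,\,(i_N)_D$. I would then define
$$\eta\colon (M\oplus N)_D\longrightarrow M_D\oplus N_D,\qquad \eta(u):=\bigl((\pi_M)_D(u),\,(\pi_N)_D(u)\bigr),$$
and, in the opposite direction,
$$\psi\colon M_D\oplus N_D\longrightarrow (M\oplus N)_D,\qquad \psi(u,v):=(i_M)_D(u)+(i_N)_D(v).$$
Both are visibly $\cO_{X\times X,(x,x)}$-module homomorphisms. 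Evaluating on the standard generators of the doubles, Theorem~\ref{3.T3} yields $\eta((h,g)_D)=(h_D,g_D)$ and $\psi(h_D,g_D)=(i_M(h))_D+(i_N(g))_D=(h,0)_D+(0,g)_D$.

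The next task is the coordinate-wise identity $(h,0)_D+(0,g)_D=(h,g)_D$ in $\cO_{X\times X,(x,x)}^{2(p+q)}$, which is immediate from $v_D=(v\circ\pi_1,v\circ\pi_2)$ and the linearity of $\cdot\circ\pi_k$. With this in hand, $\psi(h_D,g_D)=(h,g)_D$, and in particular the formula $\eta((h,g)_D)=(h_D,g_D)$ of the ``Furthermore'' clause is established.

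Finally I would check that $\psi\circ\eta = \mathrm{id}_{(M\oplus N)_D}$ and $\eta\circ\psi = \mathrm{id}_{M_D\oplus N_D}$. Since both composites are $\cO_{X\times X,(x,x)}$-homomorphisms, it suffices to verify them on generating sets: $(M\oplus N)_D$ is generated by $\{(h,g)_D : h\in M,\, g\in N\}$, while $M_D\oplus N_D$ is generated by $\{(h_D,0),\,(0,g_D): h\in M,\, g\in N\}$, and on these generators the desired identities follow directly from the two formulas just computed. No substantial obstacle is anticipated; the only point requiring attention is that $(M\oplus N)_D$ and $M_D\oplus N_D$ are embedded in the common ambient $\cO_{X\times X,(x,x)}^{2(p+q)}$ with different orderings of the coordinates (they differ by a permutation of the middle $p+q$ slots), and $\eta$ is precisely the map that absorbs this permutation.
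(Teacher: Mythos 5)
Your proposal is correct and follows essentially the same route as the paper: doubling the canonical projections and inclusions via Theorem~\ref{3.T3}, defining $\eta$ and its inverse exactly as the paper's $\eta$ and $\delta$, verifying the formulas on the generators $(h,g)_D$ and $(h_D,0),(0,g_D)$, and using the additivity of the double to get $(h,0)_D+(0,g)_D=(h,g)_D$. Your closing remark about the coordinate permutation is a fine extra observation but not needed beyond what the paper does.
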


\begin{proof}
Consider the canonical projections and inclusions:
$$\begin{matrix}
\psi_1:  &  M\oplus N  &  \longrightarrow  &  M   & \hspace{3cm}  &  \psi_2:  &  M\oplus N  &  \longrightarrow  &  N  \\
             &   (h,g)          &  \longmapsto      &  h    & \hspace{3cm}  &              &    (h,g)         &  \longmapsto      &  g   \\
             &                     &                           &                     &                            &                    &                                     &             &       \\
\delta_1:  &  M            &  \longrightarrow  &  M\oplus N   & \hspace{3cm}      &  \delta_2:    &  N               &  \longrightarrow  &  M\oplus N  \\
               &    h      &  \longmapsto       &  (h,0_N)                 & \hspace{3cm}      &               &    g         &  \longmapsto      &  (0_M,g)   \\
\end{matrix}$$
Thus, we get the double homomorphism of each one above:

$$\begin{matrix}
(\psi_1)_D:  &  (M\oplus N)_D  &  \longrightarrow  &  M_D   & \hspace{3cm}  &  (\psi_2)_D:  &  (M\oplus N)_D  &  \longrightarrow  &  N_D  \\
                          
\end{matrix}$$
$$\begin{matrix}                          
(\delta_1)_D:  &  M_D            &  \longrightarrow  &  (M\oplus N)_D   & \hspace{3cm}      &  (\delta_2)_D:    &  N_D               &  \longrightarrow  &  (M\oplus N)_D  \\               
\end{matrix}$$

Define:

$$\begin{matrix}
\eta:  &(M\oplus N)_D  &  \longrightarrow   &  M_D\oplus N_D \\
         &          w             &   \longmapsto      & ((\psi_1)_D(w),(\psi_2)_D(w))   
\end{matrix}$$

\vspace{0,3cm}

$$\begin{matrix}
\delta:  &  M_D\oplus N_D  &  \longrightarrow  &   (M\oplus N)_D  \\
            &      (u,v)                &  \longmapsto      &   (\delta_1)_D(u)+(\delta_2)_D(v) 
\end{matrix}$$ which are $\cO_{X\times X,(x,x)}-$module homomorphisms.

\vspace{0,3cm}

Claim 1: $\eta((h,g)_D)=(h_D,g_D)$, for all $h\in M$ and $g\in N$. 

In fact, $\eta((h,g)_D)=((\psi_1)_D((h,g)_D),(\psi_2)_D((h,g)_D))=((\psi_1(h,g))_D,(\psi_2(h,g))_D)=(h_D,g_D)$.

\vspace{0,4cm}

Claim 2: $\delta(h_D,g_D)=(h,g)_D$, for all $h\in M$ and $g\in N$.

In fact, $\delta(h_D,g_D)=(\delta_1)_D(h_D)+(\delta_2)_D(g_D)=(\delta_1(h))_D+(\delta_2(g))_D=(h,0_N)_D+(0_M,g)_D=((h,0_N)+(0_M,g))_D=(h,g)_D$.

\vspace{0,4cm}

By the Claims 1 and 2 we have that $\delta\circ\eta((h,g)_D)=(h,g)_D$, $\eta\circ\delta(h_D,0_{N_D})=(h_D,0_{N_D})$ and $\eta\circ\delta(0_{M_D},g_D)=(0_{M_D},g_D)$, for all $h\in M$ and $g\in N$. 

Since $\{(h,g)_D\mbox{ / } h\in M\mbox{ and }g\in N\}$ is a generator set of $(M\oplus N)_D$ and \\$\{(h_D,0_{N_D}),(0_{M_D},g_D)\mbox{ / } h\in M\mbox{ and }g\in N\}$ is a generator set of $M_D\oplus N_D$ then we conclude that $\delta\circ\eta=id_{(M\oplus N)_D}$ and $\eta\circ\delta=id_{M_D\oplus N_D}$, which finishes the proof of the theorem.
\end{proof}

\begin{corollary}\label{3.C22}
Let $M_i\subset\cO_{X,x}^{p_i}$ be $\cO_{X,x}-$submodules, for each $i\in\{1,...,r\}$. Then $$(M_1\oplus ...\oplus M_r)_D\cong(M_1)_D\oplus ... \oplus(M_r)_D$$ as $\cO_{X\times X,(x,x)}-$submodules of $\cO_{X\times X,(x,x)}^{2(p_1+...+p_r)}$ through an isomorphism such that $$(h_1,...,h_r)_D\longmapsto((h_1)_D,...,(h_r)_D)$$ for all $h_i\in M_i$.
\end{corollary}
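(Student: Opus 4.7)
The natural strategy is induction on $r$, using Theorem \ref{3.T21} as both the base case and the inductive bridge, with Proposition \ref{3.P9}(b) (functoriality of the double under composition) to keep track of the explicit formula on generators.

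For the base case $r=2$, the statement is exactly Theorem \ref{3.T21}. For the inductive step, I would regard $M_1\oplus\cdots\oplus M_r$ as an iterated direct sum, say $(M_1\oplus\cdots\oplus M_{r-1})\oplus M_r$, inside $\cO_{X,x}^{p_1+\cdots+p_{r-1}}\oplus\cO_{X,x}^{p_r}$. Applying Theorem \ref{3.T21} with the first factor $M_1\oplus\cdots\oplus M_{r-1}$ and the second factor $M_r$ yields an $\cO_{X\times X,(x,x)}$-isomorphism
$$\eta_1:(M_1\oplus\cdots\oplus M_r)_D\longrightarrow (M_1\oplus\cdots\oplus M_{r-1})_D\oplus (M_r)_D$$
such that $\eta_1(((h_1,\ldots,h_{r-1}),h_r)_D)=((h_1,\ldots,h_{r-1})_D,(h_r)_D)$.

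By the induction hypothesis applied to the first $r-1$ modules, there is an $\cO_{X\times X,(x,x)}$-isomorphism
$$\eta_2:(M_1\oplus\cdots\oplus M_{r-1})_D\longrightarrow(M_1)_D\oplus\cdots\oplus (M_{r-1})_D$$
carrying $(h_1,\ldots,h_{r-1})_D$ to $((h_1)_D,\ldots,(h_{r-1})_D)$. I would then form the composition
$$\eta:=(\eta_2\oplus \mathrm{id}_{(M_r)_D})\circ\eta_1,$$
which is an $\cO_{X\times X,(x,x)}$-isomorphism onto $(M_1)_D\oplus\cdots\oplus(M_r)_D$. Tracking the image of an arbitrary generator $(h_1,\ldots,h_r)_D$ through this composition yields precisely $((h_1)_D,\ldots,(h_r)_D)$, as required.

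The only subtlety, and the step most worth checking carefully, is that Theorem \ref{3.T21} was stated for the decomposition of $M\oplus N$ with respect to the \emph{canonical} projection and inclusion maps, and one needs to observe that the identification $(M_1\oplus\cdots\oplus M_{r-1})\oplus M_r\cong M_1\oplus\cdots\oplus M_r$ is compatible with these canonical maps, so that the formula for $\eta_1$ on generators is exactly the one quoted above. Once that identification is made, the rest is routine bookkeeping: the desired $\cO_{X\times X,(x,x)}$-submodule inclusion of both sides into $\cO_{X\times X,(x,x)}^{2(p_1+\cdots+p_r)}$ follows from Proposition \ref{3.P20} applied to the coordinate embeddings, and the explicit formula on generators follows from the two claims about $\eta_1$ and $\eta_2$ by direct substitution. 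I do not expect any real obstacle beyond careful bracketing of the nested direct sums.
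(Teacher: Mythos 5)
Your proposal is correct and matches the paper's approach exactly: the paper's proof of this corollary is the one-liner ``Induction on $r$ and use the previous theorem,'' and your argument is simply a careful expansion of that induction, including the right observation that the iterated-sum identification is compatible with the canonical projections and inclusions used in Theorem~\ref{3.T21}.
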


\begin{proof}
Induction on $r$ and use the previous theorem.
\end{proof}

\begin{proposition}\label{P3.1.13}
	Let $M\subset N$ be $\cO_{X,x}-$submodules of $\cO_{X,x}^p$.
	\begin{enumerate}
		\item [a)] If $M_D$ has finite length then $M$ has finite length and $\ell(M)\leq\ell(M_D)$;
		
		\item[b)] If $M_D$ has finite colength in $N_D$ then $M$ has finite colength in $N$.
		
	\end{enumerate}
	
\end{proposition}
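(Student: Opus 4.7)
The plan is to exploit the order-preserving correspondence between chains of submodules of $M$ and chains of submodules of $M_D$ afforded by Proposition \ref{3.P1}. Combining parts (c) and (d) of that proposition with their contrapositives yields the key equivalence
$$P \subsetneq Q \iff P_D \subsetneq Q_D$$
for any pair of $\cO_{X,x}$-submodules $P,Q\subset\cO_{X,x}^p$. With this in hand, both statements reduce to a monotonicity argument on chain lengths.

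For part (a), I would take an arbitrary strictly ascending chain of $\cO_{X,x}$-submodules
$$0 = P_0 \subsetneq P_1 \subsetneq \cdots \subsetneq P_k = M$$
and apply the doubling operation termwise. By the equivalence above this produces a strictly ascending chain $0 = (P_0)_D \subsetneq (P_1)_D \subsetneq \cdots \subsetneq (P_k)_D = M_D$ of $\cO_{X\times X,(x,x)}$-submodules of $M_D$, whose length is therefore bounded by $\ell(M_D)$. Taking the supremum over all such chains yields $\ell(M)\leq\ell(M_D)<\infty$, and in particular $M$ has finite length.

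For part (b), I would use that the colength of $M$ in $N$ is $\ell(N/M)$, and that chains of submodules of $N/M$ correspond bijectively to chains of $\cO_{X,x}$-submodules of $N$ sandwiched between $M$ and $N$. Applying the same lifting principle to a chain $M = P_0 \subsetneq P_1 \subsetneq \cdots \subsetneq P_k = N$ produces a strict chain of the same length between $M_D$ and $N_D$, forcing $k\leq\ell(N_D/M_D)<\infty$; taking the supremum over all such chains gives $\ell(N/M)<\infty$.

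I do not expect any substantial obstacle: the whole argument is a direct repackaging of Proposition \ref{3.P1} at the level of lattices of submodules. The one subtlety worth flagging is that one should not expect every $\cO_{X\times X,(x,x)}$-submodule of $M_D$ to be of the form $P_D$ for some $\cO_{X,x}$-submodule $P\subset M$, so a composition series of $M_D$ cannot in general be pulled back down to $M$; fortunately the argument uses only monotonicity of chain lengths under the doubling operation, which is precisely what Proposition \ref{3.P1} supplies.
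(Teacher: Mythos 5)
Your proposal is correct and follows essentially the same route as the paper: both proofs apply the doubling operation termwise to a strictly ascending chain, using that Proposition~\ref{3.P1}(c),(d) preserve strict inclusions, and then bound the chain length by $\ell(M_D)$ (resp.\ $\ell(N_D/M_D)$). Your closing remark about submodules of $M_D$ not generally being of the form $P_D$ correctly identifies why only an inequality of lengths is obtained, and is a useful observation even though the paper leaves it implicit.
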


\begin{proof}
	(a) If $r\in\bN$ and $(M_i)_{i=0}^r$ is an ascending series of $M$, then $((M_i)_D)_{i=0}^r$ is an ascending series of $M_D$, which has finite length. Thus, $r\leq\ell(M_D)$. Therefore $\ell(M)$ is finite and $\ell(M)\leq\ell(M_D)$.
	
	(b) Let $r\in\bN$ and consider an arbitrary ascending series of $\frac{N}{M}$ of length $r$. This series can be given on the form $$\frac{N_0}{M}\subsetneq\frac{N_1}{M}\subsetneq ... \subsetneq \frac{N_{r-1}}{M}\subsetneq\frac{N_r}{M}=\frac{N}{M}$$ where $N_0\subsetneq N_1\subsetneq ...\subsetneq N_{r-1}\subsetneq N_r=N$ are $\cO_{X,x}$-submodules of $N$ which contain $M$. Then, $$\frac{(N_0)_D}{(M_D)}\subsetneq\frac{(N_1)_D}{M_D}\subsetneq ... \subsetneq \frac{(N_{r-1})_D}{M_D}\subsetneq\frac{(N_r)_D}{M_D}=\frac{N_D}{M_D}$$ is an ascending series of $\frac{N_D}{M_D}$, which has finite length by hypothesis. Hence, $r\leq\ell(\frac{N_D}{M_D})$ and $\ell(\frac{N}{M})$ is finite.  
	
\end{proof}

\section{HOMOLOGICAL ASPECTS OF THE DOUBLE STRUCTURE}\label{sec2}

\vspace{1cm}

\begin{proposition}\label{3.P6}
Let
$$\begin{matrix}
      &   \phi                   &       &   \gamma           &          \\
M   & \longrightarrow   &  N   & \longrightarrow &  P   
\end{matrix}$$ be an sequence of $\cO_{X,x}-$module homomorphism and consider the double sequence

$$\begin{matrix}
      &   \phi_D                   &       &   \gamma_D           &          \\
M_D   & \longrightarrow   &  N_D   & \longrightarrow &  P_D \mbox{                                    .}  
\end{matrix}$$ 

\vspace{0,3cm}

If $Im(\phi)\subset Ker(\gamma)$ then $Im(\phi_D)\subset Ker(\gamma_D)$.
\end{proposition}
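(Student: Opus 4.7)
The plan is to reduce the statement to facts already established in Section 1, in particular Propositions \ref{3.P1}, \ref{3.P4}, \ref{3.P9} and Corollary \ref{3.C5}. There are two equivalent routes; I would write the proof along the first one, and mention that the second also works.

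The direct route goes through images. First I would apply Proposition \ref{3.P4} (a) to get $Im(\phi_D)=(Im(\phi))_D$. Next, using the hypothesis $Im(\phi)\subset Ker(\gamma)$, Proposition \ref{3.P1} (c) yields the containment $(Im(\phi))_D\subset(Ker(\gamma))_D$. Finally, Proposition \ref{3.P4} (b) gives $(Ker(\gamma))_D\subset Ker(\gamma_D)$. Chaining these three inclusions gives the desired conclusion $Im(\phi_D)\subset Ker(\gamma_D)$.

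An alternative (which I would mention in one sentence) is the composition route: the hypothesis $Im(\phi)\subset Ker(\gamma)$ is exactly the statement that $\gamma\circ\phi$ is the zero homomorphism from $M$ to $P$. By Proposition \ref{3.P9} (b), $(\gamma\circ\phi)_D=\gamma_D\circ\phi_D$, and by Corollary \ref{3.C5} (d), the double of the zero homomorphism is zero. Hence $\gamma_D\circ\phi_D=0$, which is equivalent to $Im(\phi_D)\subset Ker(\gamma_D)$.

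There is no real obstacle here; both arguments are essentially immediate consequences of the functorial compatibilities between a module homomorphism and its double developed earlier in the section. The only subtlety worth pointing out is that the containment $(Ker(\gamma))_D\subset Ker(\gamma_D)$ coming from Proposition \ref{3.P4} (b) is typically strict, so the proposition does not upgrade to exactness of the doubled sequence; this asymmetry between images and kernels (compare Proposition \ref{3.P4} (a) versus (b)) is exactly what prevents a stronger statement.
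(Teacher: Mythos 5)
Your first route is exactly the paper's proof: it chains $Im(\phi_D)=(Im(\phi))_D$ from Proposition \ref{3.P4} (a), the inclusion $(Im(\phi))_D\subset(Ker(\gamma))_D$ from Proposition \ref{3.P1} (c), and $(Ker(\gamma))_D\subset Ker(\gamma_D)$ from Proposition \ref{3.P4} (b). The alternative composition argument you sketch is also valid, but the main argument coincides with the paper's, so nothing further is needed.
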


\begin{proof}
Since $Im(\phi)\subset Ker(\gamma)$ then $(Im(\phi))_D\subset(Ker(\gamma))_D$. Hence, $Im(\phi_D)=(Im(\phi))_D\subset(Ker(\gamma))_D\subset Ker(\gamma_D)$.
\end{proof}

We will see that the double homomorphism gives a natural way to study the homology of the double structure.

\begin{definition}[The double chain complex]\label{3.D7}
	Let $\cC=(M_{\bullet},\phi_{\bullet})$ be a chain complex in $\mathcal T(\cO_{X,x})$. We define $$\cC_D:=((M_{\bullet})_D,(\phi_{\bullet})_D)$$ and by Proposition \ref{3.P6} we have that $\cC_D$ is a chain complex in $\mathcal T(\cO_{X\times X,(x,x)})$. The chain complex $\cC_D$ is called {\bf the double of $\cC$}.
\end{definition}

\begin{proposition}\label{3.P8}
Let $C=(M_{\bullet},\phi_{\bullet})$ be a chain complex. If $C_D$ is an exact sequence then $\cC$ is an exact sequence. In other words, if $\cC_D$ has trivial homology then $\cC$ has trivial homology.
\end{proposition}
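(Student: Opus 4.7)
The plan is to verify exactness of $\cC$ at each degree $n$ directly, using the observation that the double operation detects membership (Proposition~\ref{3.P1}(b)) together with the identification of images under doubling (Proposition~\ref{3.P4}(a)). Since $\cC$ is a chain complex, we already have $\mathrm{Im}(\phi_{n+1}) \subset \ker(\phi_n)$ for every $n$, so only the reverse inclusion requires proof.

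The key step is as follows. Fix $n$ and pick an arbitrary $h \in \ker(\phi_n)$. Then $\phi_n(h)=0$, so by the defining property of the double homomorphism we get
$$(\phi_n)_D(h_D) = (\phi_n(h))_D = 0,$$
which places $h_D \in \ker((\phi_n)_D)$. Invoking the hypothesis that $\cC_D$ is exact at $(M_n)_D$, we conclude $h_D \in \mathrm{Im}((\phi_{n+1})_D)$. By Proposition~\ref{3.P4}(a), this image equals $(\mathrm{Im}(\phi_{n+1}))_D$, so $h_D \in (\mathrm{Im}(\phi_{n+1}))_D$. Finally, by Proposition~\ref{3.P1}(b) applied to the submodule $\mathrm{Im}(\phi_{n+1}) \subset \cO_{X,x}^{p_n}$, membership of $h_D$ in the double is equivalent to membership of $h$ in the original submodule, hence $h \in \mathrm{Im}(\phi_{n+1})$, as required.

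There is no serious obstacle in this argument: everything reduces to the interplay of the three tools already established, namely the injectivity of the double on elements, the preservation of images under doubling of homomorphisms, and the compatibility of the double homomorphism with evaluation $(\phi)_D(h_D) = (\phi(h))_D$. The second statement of the proposition is just a reformulation of the first in the language of homology, so no separate argument is needed.
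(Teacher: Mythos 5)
Your proof is correct and follows essentially the same route as the paper's: both deduce $\ker(\phi_n)\subset \mathrm{Im}(\phi_{n+1})$ from exactness of the double together with $\mathrm{Im}((\phi_{n+1})_D)=(\mathrm{Im}(\phi_{n+1}))_D$ and then descend via Proposition~\ref{3.P1}. The only cosmetic difference is that you argue elementwise using Proposition~\ref{3.P1}(b), while the paper argues at the level of submodules using Proposition~\ref{3.P4}(b) and Proposition~\ref{3.P1}(c).
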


\begin{proof}
Let $i\in \mathbb Z$ be arbitrary. We have the sequences 

$$\begin{matrix}
                &   \phi_{i+1}         &           &   \phi_i              &               &                &(\phi_{i+1})_D            &            &   (\phi_i)_D             &          \\
M_{i+1}   & \longrightarrow   &  M_i   & \longrightarrow &  M_{i-1} \hspace{2cm} & (M_{i+1})_D   & \longrightarrow   &  (M_i)_D   & \longrightarrow &  (M_{i-1})_D   
\end{matrix}$$ 

\vspace{0,3cm}

We already know that $Im(\phi_{i+1})\subset Ker(\phi_i)$. Since $\cC_D$ is an exact sequence then $Im((\phi_{i+1})_D)=Ker((\phi_i)_D)$. By Proposition \ref{3.P4} we have $(Ker(\phi_i))_D\subset Ker((\phi_i)_D)=Im((\phi_{i+1})_D)=(Im(\phi_{i+1}))_D$. By Proposition \ref{3.P1} (c), we conclude that $Ker(\phi_i)\subset Im(\phi_{i+1})$. Therefore, $Im(\phi_{i+1})=Ker(\phi_i)$.

\end{proof}

\begin{proposition}\label{3.P10}
Let $\cC=(M_{\bullet},\phi_{\bullet})$ and $\cC'=(M'_{\bullet},\phi'_{\bullet})$ be chain complexes.\\ If $\alpha:\cC\longrightarrow\cC'$ is a chain complex morphism then $\alpha_D:\cC_D\longrightarrow\cC'_D$ given by \\$\{(\alpha_i)_D:(M_i)_D\rightarrow(M'_i)_D\mbox{ / } i\in\mathbb{Z}\}$ is a chain complex morphism, called the {\bf the double morphism of} $\alpha$.
\end{proposition}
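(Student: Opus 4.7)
The plan is to unwind the definition of a chain complex morphism and then invoke the compatibility of the double with composition, namely Proposition \ref{3.P9}~(b).

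Recall that $\alpha:\cC\longrightarrow\cC'$ being a chain complex morphism means that each $\alpha_i:M_i\rightarrow M'_i$ is an $\cO_{X,x}$-module homomorphism and that every square
$$
\begin{matrix}
M_i & \stackrel{\phi_i}{\longrightarrow} & M_{i-1} \\
\alpha_i\downarrow & & \downarrow \alpha_{i-1} \\
M'_i & \stackrel{\phi'_i}{\longrightarrow} & M'_{i-1}
\end{matrix}
$$
commutes, i.e.\ $\phi'_i\circ\alpha_i=\alpha_{i-1}\circ\phi_i$ for every $i\in\mathbb{Z}$. By Theorem \ref{3.T3}, each $(\alpha_i)_D:(M_i)_D\rightarrow(M'_i)_D$ is a well-defined $\cO_{X\times X,(x,x)}$-module homomorphism, so the only remaining point is the commutativity of the doubled squares.

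First I would apply the operator $(\cdot)_D$ to both sides of the identity $\phi'_i\circ\alpha_i=\alpha_{i-1}\circ\phi_i$; by Proposition \ref{3.P9}~(a) this yields the equality $(\phi'_i\circ\alpha_i)_D=(\alpha_{i-1}\circ\phi_i)_D$ in $\mathcal T(\cO_{X\times X,(x,x)})$. Then I would invoke Proposition \ref{3.P9}~(b) on each side to split the doubles of the compositions, obtaining $(\phi'_i)_D\circ(\alpha_i)_D=(\alpha_{i-1})_D\circ(\phi_i)_D$, which is exactly the commutativity of the doubled square.

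Since this holds for every $i\in\mathbb{Z}$, the family $\{(\alpha_i)_D\}_{i\in\mathbb{Z}}$ is a morphism from $\cC_D=((M_{\bullet})_D,(\phi_{\bullet})_D)$ to $\cC'_D=((M'_{\bullet})_D,(\phi'_{\bullet})_D)$. There is no real obstacle here: the statement is essentially a formal consequence of the functorial behaviour already established in Proposition \ref{3.P9}~(b), and the only thing to be careful about is making sure the indexing of the squares matches that of Definition \ref{3.D7}.
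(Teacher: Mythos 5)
Your proposal is correct and follows essentially the same route as the paper: double the commuting square $\phi'_i\circ\alpha_i=\alpha_{i-1}\circ\phi_i$ and use Proposition \ref{3.P9}~(b) to split the doubles of the compositions, yielding $(\phi'_i)_D\circ(\alpha_i)_D=(\alpha_{i-1})_D\circ(\phi_i)_D$. The only cosmetic difference is that you explicitly cite Proposition \ref{3.P9}~(a) for the trivial step of passing from an equality of maps to an equality of their doubles, which the paper absorbs silently.
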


\begin{proof}
Let $i\in\mathbb{Z}$. So we have that the diagram 
$$\begin{matrix}
       &                           &  \phi_i                  &                           &           \\
       & M_i                       &  \longrightarrow  & M_{i-1}                  &            \\
\alpha_i & \downarrow &                           & \downarrow & \alpha_{i-1} \\
       & M'_i                   &  \longrightarrow  & M'_{i-1}                   &              \\     
       &                           &  \phi'_i                  &                           &                                        
\end{matrix}$$ is commutative. By Proposition \ref{3.P9} (b) follows that $(\phi'_i)_D\circ(\alpha_i)_D=(\phi'_i\circ\alpha_i)_D=(\alpha_{i-1}\circ\phi_i)_D=(\alpha_{i-1})_D\circ(\phi_i)_D$, and the following diagram is also commutative:
$$\begin{matrix}
       &                           &  (\phi_i)_D                  &                           &           \\
       & (M_i)_D                       &  \longrightarrow  & (M_{i-1})_D                  &            \\
(\alpha_i)_D & \downarrow &                           & \downarrow & (\alpha_{i-1})_D \\
       & (M'_i)_D                   &  \longrightarrow  & (M'_{i-1})_D                   &              \\     
       &                           &  (\phi'_i)_D                  &                           &                                        
\end{matrix}$$
\end{proof}

\begin{corollary}\label{3.C11}
If $\alpha:\cC\longrightarrow\cC'$ and $\beta:\cC'\longrightarrow\cC''$ are chain morphisms then $$(\beta\circ\alpha)_D=\beta_D\circ\alpha_D.$$
\end{corollary}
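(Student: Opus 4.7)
The plan is to reduce this to the already-established Proposition \ref{3.P9} (b), which handles composition for a single double homomorphism, and then observe that chain morphisms and their compositions are defined componentwise in each degree.

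More precisely, a chain morphism $\alpha:\cC\to\cC'$ is by definition a family $\{\alpha_i:M_i\to M'_i\}_{i\in\mathbb{Z}}$ of $\cO_{X,x}$-module homomorphisms commuting with the differentials, and $\beta\circ\alpha$ is the family $\{\beta_i\circ\alpha_i\}_{i\in\mathbb{Z}}$. By Proposition \ref{3.P10}, the double morphism $(\beta\circ\alpha)_D$ is the family $\{((\beta\circ\alpha)_i)_D\}_{i\in\mathbb{Z}} = \{(\beta_i\circ\alpha_i)_D\}_{i\in\mathbb{Z}}$, while $\beta_D\circ\alpha_D$ is, componentwise, the family $\{(\beta_i)_D\circ(\alpha_i)_D\}_{i\in\mathbb{Z}}$.

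So the whole claim collapses to the pointwise equality $(\beta_i\circ\alpha_i)_D=(\beta_i)_D\circ(\alpha_i)_D$ for each $i\in\mathbb{Z}$, which is exactly Proposition \ref{3.P9} (b) applied to the pair of module homomorphisms $\alpha_i:M_i\to M'_i$ and $\beta_i:M'_i\to M''_i$. I would simply fix an arbitrary $i\in\mathbb{Z}$, invoke Proposition \ref{3.P9} (b) at that degree, and conclude that the two chain morphisms agree in every degree and are therefore equal.

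There is essentially no obstacle here; the work is bookkeeping about how compositions of chain morphisms are defined. The only thing to be careful about is making sure one genuinely uses the definitions from Proposition \ref{3.P10} (so that $(\beta\circ\alpha)_D$ and $\beta_D\circ\alpha_D$ are both unambiguously the families described above, with no ambiguity about the order in which ``double'' and ``compose'' are applied). Once that is pinned down, Proposition \ref{3.P9} (b) finishes the proof in one line per degree.
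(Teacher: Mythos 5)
Your proposal is correct and matches the paper's argument: the paper also proves this as a straightforward consequence of Proposition~\ref{3.P9}(b), applied degree by degree, using the componentwise definition of the double chain morphism from Proposition~\ref{3.P10}. You have simply spelled out the bookkeeping that the paper leaves implicit.
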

\begin{proof}
It is a straightforward consequence of the Proposition \ref{3.P9} (b).
\end{proof}

\vspace{0,5cm}

Now, we will get some results related to chain homotopy. 

Let $\cC=(M_{\bullet},\phi_{\bullet})$ and $\cC'=(M'_{\bullet},\phi'_{\bullet})$ be chain complexes. Let $\mu:\cC\rightarrow\cC'$ be a homomorphism of degree 1, i.e, $\mu$ is a collection of $\cO_{X,x}-$module homomorphisms $\{\mu_i:M_i\rightarrow M'_{i+1}\mbox{ / }i\in\mathbb{Z}\}$. We know this homomorphism induces a chain morphism $\tilde{\mu}:\cC\rightarrow \cC'$ given by $\{\tilde{\mu_i}:M_i\rightarrow M'_i\mbox{ / }i\in\mathbb{Z}\}$, where $\tilde{\mu_i}:=\phi'_{i+1}\circ\mu_i+\mu_{i-1}\circ\phi_i,\forall i\in\mathbb{Z}$.

If $\alpha,\beta:\cC\rightarrow\cC'$ are chain morphisms, remember that $\mu:\cC\rightarrow\cC'$ is defined as a {\bf homotopy between $\alpha$ and $\beta$} when $\tilde{\mu}=\alpha-\beta$, and we denote $\alpha\simeq\beta$ by $\mu$.

\vspace{0,2cm}

\begin{lemma}\label{3.L12}
Consider $\mu_D:\cC_D\rightarrow\cC'_D$ the homomorphism of degree 1 given by the double homomorphisms of $\mu:\cC\rightarrow\cC'$. Then $\widetilde{\mu_D}=(\tilde{\mu})_D$.
\end{lemma}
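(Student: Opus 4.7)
The plan is to verify the claimed equality of degree-$0$ chain maps component by component, i.e., to show that $(\widetilde{\mu_D})_i = ((\tilde{\mu})_D)_i$ as $\cO_{X\times X,(x,x)}$-homomorphisms $(M_i)_D \to (M'_i)_D$ for every $i \in \mathbb{Z}$. Once the degree-$i$ equality is unpacked, it becomes a purely formal identity that follows from the two compatibility properties of the doubling operation established in Proposition \ref{3.P9}.

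First I would unwind the definitions. On the left, by definition of the induced chain morphism applied to $\mu_D$ (whose degree-$i$ component is $(\mu_i)_D$), one has
\[
(\widetilde{\mu_D})_i \;=\; (\phi'_{i+1})_D\circ(\mu_i)_D \;+\; (\mu_{i-1})_D\circ(\phi_i)_D.
\]
On the right, by definition of the double morphism of $\tilde{\mu}$, the degree-$i$ component is simply $(\tilde{\mu}_i)_D$, where $\tilde{\mu}_i = \phi'_{i+1}\circ\mu_i + \mu_{i-1}\circ\phi_i$.

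The main step is then a single calculation. Applying Proposition \ref{3.P9}(c) (the double of a sum is the sum of the doubles) and then Proposition \ref{3.P9}(b) (the double of a composition is the composition of the doubles) yields
\[
(\tilde{\mu}_i)_D \;=\; (\phi'_{i+1}\circ\mu_i)_D + (\mu_{i-1}\circ\phi_i)_D \;=\; (\phi'_{i+1})_D\circ(\mu_i)_D + (\mu_{i-1})_D\circ(\phi_i)_D,
\]
which coincides with $(\widetilde{\mu_D})_i$. Since $i$ was arbitrary, the two morphisms of chain complexes agree in every degree, and hence $\widetilde{\mu_D} = (\tilde{\mu})_D$.

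There is no real obstacle here; the lemma is essentially a bookkeeping identity. The only point worth verifying carefully is that the indexing conventions for the degree-$1$ map $\mu_D$ match those used to define $\widetilde{(\cdot)}$, so that the components of $\mu_D$ are indeed $(\mu_i)_D : (M_i)_D \to (M'_{i+1})_D$; once this is noted, parts (b) and (c) of Proposition \ref{3.P9} deliver the conclusion immediately.
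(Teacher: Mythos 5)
Your proof is correct and matches the paper's argument: both verify the identity degreewise by unwinding the definitions and invoking Proposition \ref{3.P9}(b) and (c) to commute the doubling operation past composition and addition.
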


\begin{proof}
For all $i\in\mathbb{Z}$ we have $(\widetilde{\mu_D})_i=(\phi'_{i+1})_D\circ(\mu_i)_D+(\mu_{i-1})_D\circ(\phi_i)_D\\=(\phi'_{i+1}\circ\mu_i+\mu_{i-1}\circ\phi_i)_D=(\tilde{\mu_i})_D$, and the lemma is proved.
\end{proof}

\vspace{0,2cm}

\begin{proposition}\label{3.P13}
Let $\alpha,\beta:\cC\rightarrow\cC'$ be chain morphisms and $\mu:\cC\rightarrow\cC'$ a homomorphism of degree 1.
Then: $\mu$ is a homotopy between $\alpha$ and $\beta$ if, and only if, $\mu_D$ is a homotopy between $\alpha_D$ and $\beta_D$. 
\end{proposition}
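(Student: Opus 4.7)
The plan is to deduce this statement directly from Lemma~\ref{3.L12} combined with Proposition~\ref{3.P9}, which between them provide everything needed: the tilde operation $\mu\mapsto\tilde{\mu}$ commutes with forming doubles, and the doubling construction on homomorphisms is both additive (Proposition~\ref{3.P9} (c)) and injective on the set of homomorphisms (Proposition~\ref{3.P9} (a)). The only missing bookkeeping, which I would record first as an easy remark, is the identity $(-\phi)_D=-\phi_D$ for any $\cO_{X,x}$-homomorphism $\phi$; this follows from Proposition~\ref{3.P9} (c) applied to $\phi+(-\phi)=0$ together with Corollary~\ref{3.C5} (d), which says that the double of the zero homomorphism is the zero homomorphism. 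Consequently the double operation respects differences of homomorphisms, i.e.\ $(\alpha_i-\beta_i)_D=(\alpha_i)_D-(\beta_i)_D$ in every degree $i$.

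For the direction $(\Longrightarrow)$, I would assume $\tilde{\mu}=\alpha-\beta$, so that $\tilde{\mu}_i=\alpha_i-\beta_i$ for every $i\in\mathbb{Z}$. Apply the double in each degree: by Lemma~\ref{3.L12},
\[
(\widetilde{\mu_D})_i=(\tilde{\mu}_i)_D,
\]
and by the additivity remark above,
\[
(\tilde{\mu}_i)_D=(\alpha_i-\beta_i)_D=(\alpha_i)_D-(\beta_i)_D=(\alpha_D)_i-(\beta_D)_i.
\]
Putting these together degree by degree gives $\widetilde{\mu_D}=\alpha_D-\beta_D$, which is exactly the statement that $\mu_D$ is a homotopy between $\alpha_D$ and $\beta_D$.

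For the direction $(\Longleftarrow)$, I would assume $\widetilde{\mu_D}=\alpha_D-\beta_D$. Applying Lemma~\ref{3.L12} to the left-hand side and the additivity remark to the right-hand side, this becomes, in each degree,
\[
(\tilde{\mu}_i)_D=(\alpha_i-\beta_i)_D.
\]
Proposition~\ref{3.P9} (a) then gives $\tilde{\mu}_i=\alpha_i-\beta_i$ for every $i\in\mathbb{Z}$, that is, $\tilde{\mu}=\alpha-\beta$, so $\mu$ is a homotopy between $\alpha$ and $\beta$.

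There is no real obstacle; the proposition is a formal consequence of the tools already assembled. The closest thing to a technical point is the explicit verification that doubling commutes with subtraction of homomorphisms, but as noted this is essentially immediate from Proposition~\ref{3.P9} (c) and Corollary~\ref{3.C5} (d), and once recorded the rest of the argument is a two-line application of Lemma~\ref{3.L12} and Proposition~\ref{3.P9} (a) in each direction.
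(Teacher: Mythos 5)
Your argument is correct and follows the paper's own route exactly: both rely on Lemma~\ref{3.L12} to commute the tilde operation with doubling and on Proposition~\ref{3.P9}~(a) and~(c) to transfer the equation $\tilde{\mu}=\alpha-\beta$ back and forth across the double functor. The only difference is cosmetic — you spell out the small point that $(-\phi)_D=-\phi_D$, which the paper folds silently into its chain of equivalences.
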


\begin{proof}
We have that $\mu$ is a homotopy between $\alpha$ and $\beta$ $\iff \tilde{\mu}=\alpha-\beta$. By Proposition \ref{3.P9} (a) and (c) and the previous lemma we have: $\tilde{\mu}=\alpha-\beta \iff (\tilde{\mu})_D=(\alpha-\beta)_D \iff \widetilde{\mu_D}=\alpha_D-\beta_D \iff$ $\mu_D$ is a homotopy between $\alpha_D$ and $\beta_D$.
\end{proof}

\vspace{0,2cm}

\begin{corollary}\label{3.C14}
If $\alpha:\cC\rightarrow\cC'$ is a chain homotopy equivalence then $\alpha_D:\cC_D\rightarrow\cC'_D$ is a chain homotopy equivalence.
\end{corollary}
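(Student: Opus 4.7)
The plan is to unpack the definition of chain homotopy equivalence and then simply apply the double functorially, using the compatibilities already established in the paper. By definition, saying that $\alpha:\cC\rightarrow\cC'$ is a chain homotopy equivalence means that there exists a chain morphism $\beta:\cC'\rightarrow\cC$ together with homotopies $\mu:\cC\rightarrow\cC$ and $\nu:\cC'\rightarrow\cC'$ of degree 1 witnessing $\beta\circ\alpha\simeq id_{\cC}$ (via $\mu$) and $\alpha\circ\beta\simeq id_{\cC'}$ (via $\nu$).

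The candidate for a homotopy inverse of $\alpha_D$ is of course $\beta_D:\cC'_D\rightarrow\cC_D$, which is a chain morphism by Proposition \ref{3.P10}. The key step is to verify $\beta_D\circ\alpha_D\simeq id_{\cC_D}$ and $\alpha_D\circ\beta_D\simeq id_{\cC'_D}$. For the first: by Corollary \ref{3.C11}, $\beta_D\circ\alpha_D=(\beta\circ\alpha)_D$; and the observation immediately after Theorem \ref{3.T3} gives $(id_{\cC})_D=id_{\cC_D}$ in each degree. Since $\mu$ is a homotopy from $\beta\circ\alpha$ to $id_{\cC}$, Proposition \ref{3.P13} produces $\mu_D$ as a homotopy from $(\beta\circ\alpha)_D=\beta_D\circ\alpha_D$ to $(id_{\cC})_D=id_{\cC_D}$. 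The argument for the other direction is identical, using $\nu_D$ in place of $\mu_D$.

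Putting these together, $\beta_D$ is a chain homotopy inverse of $\alpha_D$, which is precisely the conclusion. There is really no obstacle: the proof is a formal consequence of the fact that the double operation behaves functorially with respect to composition (Proposition \ref{3.P9}(b), Corollary \ref{3.C11}), preserves identities, and respects homotopies (Proposition \ref{3.P13}). The only thing to be careful about is bookkeeping: making sure one applies the degree-zero identity $(id_{\cC})_D=id_{\cC_D}$ and not conflating the two directions of the equivalence.
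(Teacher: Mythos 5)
Your proposal is correct and follows essentially the same route as the paper: pass the homotopy inverse $\beta$ and the witnessing homotopies through the double, using Proposition \ref{3.P13} together with $(\beta\circ\alpha)_D=\beta_D\circ\alpha_D$ (Corollary \ref{3.C11}) and $(id_{\cC})_D=id_{\cC_D}$. Your write-up just makes the homotopies $\mu,\nu$ explicit, which the paper leaves implicit.
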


\begin{proof}
By hypothesis there exists a chain morphism $\beta:\cC'\rightarrow\cC$ such that $\beta\circ\alpha\simeq id_{\cC}$ and $\alpha\circ\beta\simeq id_{\cC'}$. By the previous proposition, we have $(\beta\circ\alpha)_D\simeq (id_{\cC})_D$ and $(\alpha\circ\beta)_D\simeq (id_{\cC'})_D$, and therefore $(\beta)_D\circ(\alpha)_D\simeq id_{\cC_D}$ and $\alpha_D\circ\beta_D\simeq id_{\cC'_D}$.
\end{proof}

\begin{corollary}\label{3.C15}
If $\cC$ is a contractible chain complex then $\cC_D$ is also contractible.
\end{corollary}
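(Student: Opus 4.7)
The plan is to unwind the definition of contractibility and then invoke Proposition \ref{3.P13} directly. Recall that $\cC$ being contractible means that the identity chain morphism $id_{\cC}:\cC\to\cC$ is chain-homotopic to the zero chain morphism $0_{\cC}:\cC\to\cC$; equivalently, there exists a degree 1 homomorphism $\mu:\cC\to\cC$ with $\tilde\mu=id_{\cC}-0_{\cC}=id_{\cC}$.

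Given such a $\mu$, I would consider its double $\mu_D:\cC_D\to\cC_D$, which is again a degree 1 homomorphism since taking doubles is functorial in each degree. By Proposition \ref{3.P13}, $\mu$ being a homotopy between $id_{\cC}$ and $0_{\cC}$ is equivalent to $\mu_D$ being a homotopy between $(id_{\cC})_D$ and $(0_{\cC})_D$. The two small bookkeeping facts I need are that $(id_{\cC})_D=id_{\cC_D}$ and $(0_{\cC})_D=0_{\cC_D}$. The first follows degree-wise from the identity $(id_{M})_D=id_{M_D}$ already remarked in Section 1, while the second follows degree-wise from Corollary \ref{3.C5}(d), which says that the double of the zero homomorphism is the zero homomorphism. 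Combining these gives $id_{\cC_D}\simeq 0_{\cC_D}$, i.e.\ $\cC_D$ is contractible.

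Alternatively, one can give a one-line argument from Corollary \ref{3.C14}: a contractible complex is, by definition, chain homotopy equivalent to the zero complex, and since the double of the zero complex is the zero complex, Corollary \ref{3.C14} immediately produces a chain homotopy equivalence between $\cC_D$ and the zero complex. Either route is essentially a formality once Proposition \ref{3.P13} (or Corollary \ref{3.C14}) is in hand, so there is no real obstacle; the only care needed is the verification that doubling preserves identity and zero morphisms, which is immediate from the uniqueness clause in Theorem \ref{3.T3}.
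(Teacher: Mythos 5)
Your proposal is correct and follows essentially the same route as the paper: the paper's proof likewise observes that contractibility means $id_{\cC}\simeq 0_{\cC}$, applies Proposition \ref{3.P13} to get $(id_{\cC})_D\simeq(0_{\cC})_D$, and identifies these with $id_{\cC_D}$ and $0_{\cC_D}$. Your extra bookkeeping (citing the remark $(id_M)_D=id_{M_D}$ and Corollary \ref{3.C5}(d)) and the alternative via Corollary \ref{3.C14} are fine but do not change the argument.
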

\begin{proof}
Since $\cC$ is contractible then $id_{\cC}\simeq 0_{\cC}$, and by the Proposition \ref{3.P13} follows that $(id_{\cC})_D\simeq (0_{\cC})_D$, thus $id_{\cC_D}\simeq 0_{\cC_D}$. Hence, $\cC_D$ is contractible. 
\end{proof}

Notice the nice relation between the Proposition \ref{3.P8} and Corollary \ref{3.C15}. We already know every contractible chain complex is an exact sequence. The Proposition \ref{3.P8} says that the exactness on the double level implies the exactness on the single level. The Corollary \ref{3.C15}, which treats about contractible (stronger than exactness), says the opposite.

\vspace{0,2cm}

It is clear that all the results obtained in this section can be naturally translated to the cohomology language.

\vspace{0,5cm}
\section{THE DOUBLE CATEGORY}\label{sec3}

Let us define the category $\cD(\cO_{X,x})$. 

The objects of $\cD(\cO_{X,x})$ consist of the double of modules in $\mathcal T(\cO_{X,x})$. Given $M_D,N_D$ objects in $\cD(\cO_{X,x})$, we define 
$$Mor(M_D,N_D):=\{\phi_D:M_D\rightarrow N_D\mbox{ / }\phi:M\rightarrow N\mbox{ is an } \cO_{X,x}-\mbox{module homomorphism}\}.$$

Working with the standard composition of maps, we have the category $\cD(\cO_{X,x})$, called the {\bf double category of} $(X,x)$.

The morphisms in $\cD(\cO_{X,x})$ are called $\cO_{X,x}-$double morphisms. Observe that the $\cO_{X,x}-$doubles morphisms are $\cO_{X\times X,(x,x)}-$homomorphisms with an addictional property: they preserve the double structure.

Notice that $\cD(\cO_{X,x})$ is a subcategory of $\mathcal T(\cO_{X\times X,(x,x)})$.

\begin{theorem}\label{3.T16}
The covariant functor
$$\begin{matrix}
\mathbf{D}:   &   \mathcal T(\cO_{X,x})  &  \longrightarrow  & \cD(\cO_{X,x})  \\
                     &                M                    &   \longmapsto    &      M_D            \\
                     &       \phi:M\rightarrow N  &   \longmapsto    & \phi_D:M_D\rightarrow N_D
\end{matrix}$$ is an isomorphism of categories.
\end{theorem}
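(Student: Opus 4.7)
The proof is essentially a bookkeeping exercise that collects the results of Section 1. The plan is to verify separately the three ingredients of an isomorphism of categories: (i) $\mathbf{D}$ is a well-defined covariant functor; (ii) $\mathbf{D}$ is a bijection on objects; (iii) $\mathbf{D}$ is a bijection on each morphism set $\text{Hom}_{\cO_{X,x}}(M,N) \to \text{Mor}_{\cD(\cO_{X,x})}(M_D,N_D)$. Recall that an isomorphism of categories (as opposed to a mere equivalence) requires these literal bijections, so there is nothing to do with natural transformations.

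For (i), the assignment on objects $M \mapsto M_D$ lands in $\cD(\cO_{X,x})$ by the very definition of that category. On morphisms, Theorem \ref{3.T3} ensures that each $\cO_{X,x}$-homomorphism $\phi : M \to N$ produces a unique $\cO_{X\times X,(x,x)}$-homomorphism $\phi_D : M_D \to N_D$, which by definition belongs to $\text{Mor}(M_D,N_D)$. The identity axiom $(id_M)_D = id_{M_D}$ was recorded immediately after Theorem \ref{3.T3}, and the composition axiom $(\gamma\circ\phi)_D = \gamma_D\circ\phi_D$ is precisely Proposition \ref{3.P9}(b). Covariance is automatic.

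For (ii), surjectivity on objects is a tautology: every object of $\cD(\cO_{X,x})$ is by definition $M_D$ for some $M \in \mathcal{T}(\cO_{X,x})$. Injectivity on objects follows from Proposition \ref{3.P1}(d): if $M_D = N_D$ as subsets of $\cO_{X\times X,(x,x)}^{2p}$, then $M = N$.

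For (iii), fix $M,N$ in $\mathcal{T}(\cO_{X,x})$. Surjectivity of $\mathbf{D} : \text{Hom}(M,N) \to \text{Mor}(M_D,N_D)$ is built into the definition of the hom-set in $\cD(\cO_{X,x})$, which consists exactly of morphisms of the form $\phi_D$. Injectivity is the nontrivial direction of Proposition \ref{3.P9}(a): $\phi_D = \phi'_D$ implies $\phi = \phi'$. Combining (i)--(iii) yields the claim. No step is really an obstacle; the only point that might seem delicate is making sure the definition of $\text{Mor}(M_D,N_D)$ is \emph{equivalent} to the collection of doubles of $\cO_{X,x}$-homomorphisms and not merely contained in it --- but this equivalence is the literal definition given above the theorem.
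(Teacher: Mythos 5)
Your proof is correct and takes essentially the same approach as the paper, which likewise invokes Proposition \ref{3.P1}(d) for the bijection on objects and Proposition \ref{3.P9}(a) for the bijection on morphism sets. Your write-up is simply more explicit about the functoriality axioms (identity and composition), which the paper leaves implicit.
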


\begin{proof}
The Proposition \ref{3.P1} (d) proves that the map between the objects is a bijection, and the Proposition \ref{3.P9} (a) proves that the map between the morphisms is a bijection. Hence, $\mathbf{D}$ is an isomorphism of categories.
\end{proof}

\begin{corollary}\label{3.C17}
$\mathcal T(\cO_{X,x})$ can be seen as a subcategory of $\mathcal T(\cO_{X\times X,(x,x)})$.
\end{corollary}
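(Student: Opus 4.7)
The plan is to chain together the isomorphism of categories already established in Theorem \ref{3.T16} with the observation made just before it that $\cD(\cO_{X,x})$ sits inside $\mathcal T(\cO_{X\times X,(x,x)})$ as a subcategory. The goal is to produce a faithful embedding functor $\mathcal T(\cO_{X,x})\to\mathcal T(\cO_{X\times X,(x,x)})$ whose image forms a subcategory, and then identify $\mathcal T(\cO_{X,x})$ with this image.

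First I would recall that every object $M\in\mathcal T(\cO_{X,x})$ yields, via the functor $\mathbf{D}$, the object $M_D\in\cD(\cO_{X,x})\subset\mathcal T(\cO_{X\times X,(x,x)})$, and every morphism $\phi:M\to N$ in $\mathcal T(\cO_{X,x})$ yields $\phi_D:M_D\to N_D$ lying in $\mathcal T(\cO_{X\times X,(x,x)})$. By Theorem \ref{3.T16}, this assignment is an isomorphism onto $\cD(\cO_{X,x})$; in particular the assignment on objects is a bijection (Proposition \ref{3.P1} (d)) and the assignment on morphisms is a bijection (Proposition \ref{3.P9} (a)). Composing $\mathbf{D}$ with the inclusion $\cD(\cO_{X,x})\hookrightarrow\mathcal T(\cO_{X\times X,(x,x)})$ produces a covariant functor $\iota:\mathcal T(\cO_{X,x})\to\mathcal T(\cO_{X\times X,(x,x)})$ which is injective on objects and faithful on morphisms.

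Next I would verify that the image of $\iota$ is closed under the categorical operations of $\mathcal T(\cO_{X\times X,(x,x)})$ relevant for being a subcategory, namely that it contains identities and is stable under composition. The identity condition is immediate from the remark $(id_M)_D=id_{M_D}$ noted in Section 1, and the composition condition follows from Proposition \ref{3.P9} (b). Hence $\iota(\mathcal T(\cO_{X,x}))=\cD(\cO_{X,x})$ is a subcategory of $\mathcal T(\cO_{X\times X,(x,x)})$, and the restriction of $\iota$ onto its image is precisely the isomorphism $\mathbf{D}$ of Theorem \ref{3.T16}.

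There is no real obstacle here; the only subtlety is that the identification of $\mathcal T(\cO_{X,x})$ with a subcategory of $\mathcal T(\cO_{X\times X,(x,x)})$ is \emph{through} the functor $\mathbf{D}$ rather than a literal inclusion (the underlying rings are different). All of the genuine work, namely the bijectivity on objects and on morphisms, has already been carried out in Propositions \ref{3.P1} and \ref{3.P9} and packaged in Theorem \ref{3.T16}, so the proof reduces to a one-line appeal.
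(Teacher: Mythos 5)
Your argument is correct and matches the paper's intent exactly: the corollary is an immediate consequence of Theorem \ref{3.T16} combined with the observation, made just before that theorem, that $\cD(\cO_{X,x})$ is a subcategory of $\mathcal T(\cO_{X\times X,(x,x)})$. Your added verification that the image of $\mathbf{D}$ is closed under identities and composition (via $(id_M)_D=id_{M_D}$ and Proposition \ref{3.P9}~(b)) is a reasonable spelling-out of what the paper takes for granted, but it is the same route.
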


The Theorem \ref{3.T16} implies that $\mathcal T(\cO_{X,x})$ and $\cD(\cO_{X,x})$ are essentially the same category, so they have the same behavior in all of the categorical statements. But, one of them is reasonable to emphasize, in the next corollary, which is interesting to compare with the result obtained in the Corollary \ref{3.C5}.

\begin{corollary}\label{3.C18}
Let $\phi:M\rightarrow N$ be an $\cO_{X,x}-$module homomorphism. Then:
\begin{enumerate}
\item[a)] $\phi:M\rightarrow N$ is an $\cO_{X,x}-$monomorphism of modules\\ if, and only if, $\phi_D:M_D\rightarrow N_D$ is an $\cO_{X,x}-$double monomorphism;

\vspace{0,2cm}

\item[b)] $\phi:M\rightarrow N$ is an $\cO_{X,x}-$epimorphism of modules if, and only if, $\phi_D:M_D\rightarrow N_D$ is an $\cO_{X,x}-$double epimorphism;

\vspace{0,2cm}

\item[c)] $\phi:M\rightarrow N$ is an $\cO_{X,x}-$isomorphism of modules if, and only if, $\phi_D:M_D\rightarrow N_D$ is an $\cO_{X,x}-$double isomorphism.
\end{enumerate}
\end{corollary}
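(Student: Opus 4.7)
The plan is to deduce all three statements from the fact that monomorphism, epimorphism, and isomorphism are purely \emph{categorical} notions, defined respectively by left cancellability, right cancellability, and the existence of a two-sided inverse, combined with Theorem \ref{3.T16}, which asserts that the double functor $\mathbf{D}$ is an isomorphism of categories from $\mathcal T(\cO_{X,x})$ onto $\cD(\cO_{X,x})$. Any isomorphism of categories preserves and reflects every categorical property, so each of the three equivalences will fall out mechanically. All three parts can also be verified by hand, invoking only Proposition \ref{3.P9}.

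For part (a) I would proceed as follows. Assume $\phi$ is a monomorphism, and let $\psi_D,\psi'_D:L_D\to M_D$ be double morphisms with $\phi_D\circ\psi_D=\phi_D\circ\psi'_D$. By Proposition \ref{3.P9} (b) this reads $(\phi\circ\psi)_D=(\phi\circ\psi')_D$; Proposition \ref{3.P9} (a) then gives $\phi\circ\psi=\phi\circ\psi'$, and left cancellability of $\phi$ forces $\psi=\psi'$, hence $\psi_D=\psi'_D$. Conversely, assume $\phi_D$ is a double monomorphism and take $\psi,\psi':L\to M$ with $\phi\circ\psi=\phi\circ\psi'$. Applying $\mathbf{D}$ yields $\phi_D\circ\psi_D=\phi_D\circ\psi'_D$, so $\psi_D=\psi'_D$ by hypothesis, and another use of Proposition \ref{3.P9} (a) recovers $\psi=\psi'$.

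Part (b) is the mirror argument, right-composing rather than left-composing, and is otherwise identical. For part (c) I would either combine (a) and (b) with the existence of two-sided inverses, or argue directly: Corollary \ref{3.C5} (c) already shows $\phi$ is an $\cO_{X,x}$-module isomorphism iff $\phi_D$ is an $\cO_{X\times X,(x,x)}$-module isomorphism, and Proposition \ref{3.P9} (b) together with $(id_M)_D=id_{M_D}$ makes the module inverse $\gamma$ of $\phi$ into a categorical inverse $\gamma_D$ of $\phi_D$ living inside $\cD(\cO_{X,x})$; any categorical inverse of $\phi_D$ in $\cD(\cO_{X,x})$ is of this form, by definition of the morphisms in this category together with Proposition \ref{3.P9} (a). There is no genuine obstacle here: the substantive work lies in Theorem \ref{3.T16} and Proposition \ref{3.P9}, and this corollary is simply the categorical shadow of those facts.
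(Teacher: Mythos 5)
Your proposal is correct and follows essentially the paper's own route: the paper states this corollary without a separate proof, treating it as an immediate consequence of Theorem \ref{3.T16} (an isomorphism of categories preserves and reflects all categorical properties), exactly as you do, with your explicit cancellability checks via Proposition \ref{3.P9} merely fleshing out that argument. The only caveat is that the paper's remark after the corollary reads ``monomorphism'' as the existence of a left-inverse morphism rather than left cancellability, but your leading functorial argument covers that reading just as well.
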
 

\vspace{0,5cm}

Here it is reasonable to emphasize the difference between the notions of injective homomorphism and monomorphism of modules, which are not the samething in the category of modules, since we are understanding the term {\bf monomorphism} in the categorical sense, i.e, there is a left-inverse morphism.

The same remark has to be done between surjective homomorphism and epimorphism.

\begin{remark}
The covariant functor
$$\begin{matrix}
\mathbf{D}:   &   \mathcal T(\cO_{X,x})  &  \longrightarrow  & \mathcal T(\cO_{X\times X,(x,x)})  \\
                     &                M                    &   \longmapsto    &      M_D            \\
                     &       \phi:M\rightarrow N  &   \longmapsto    & \phi_D:M_D\rightarrow N_D
\end{matrix}$$ is not an isomorphism of categories anymore. 
\end{remark}

In fact, suppose $(X,x)$ irreducible. It is proved in \cite{GT} that the generic rank of the double of every module $M$ in $\mathcal T(\cO_{X,x})$ has generic rank even. Thus, the map between the objects cannot be surjective.

\vspace{0,5cm}

\section{THE DOUBLE IN A QUOTIENT OF A FREE $\cO_X$-MODULE OF FINITE RANK}

Let $W$ be an $\cO_{X,x}-$submodule of $\cO_{X,x}^p$, and consider the quotient map $$\pi:\cO_{X,x}^p\longrightarrow \frac{\cO_{X,x}^p}{W}\hspace{2cm}.$$

\vspace{0,4cm}

Then, $W_D$ is an $\cO_{X\times X,(x,x)}-$submodule of $\cO_{X\times X,(x,x)}^{2p}$.

\vspace{0,3cm}

Let $h\in\cO_{X,x}^p$. We define the double of $h+W\in\frac{\cO_{X,x}^p}{W}$ as $$(h+W)_D:=h_D+W_D\in\frac{\cO_{X\times X,(x,x)}^{2p}}{W_D}.$$

Notice the definition of $(h+W)_D$ does not depend of the choice of the representative $h$. In fact, if $h+W=g+W$ then $h-g\in W\Longrightarrow (h-g)_D\in W_D\\ \Longrightarrow h_D-g_D\in W_D\Longrightarrow h_D+W_D=g_D+W_D$.

\vspace{0,3cm}

Now, we want to define the double of a submodule M of $\frac{\cO_{X,x}^p}{W}$. We have that $\pi^{-1}(M)$ is a submodule of $\cO_{X,x}^p$ and $\pi^{-1}(M)\supset W$, hence $(\pi^{-1}(M))_D$ is a submodule of $\cO_{X\times X,(x,x)}^{2p}$ and $(\pi^{-1}(M))_D\supset W_D$.

Then, we define the double of $M$ as $$M_D:=\frac{(\pi^{-1}(M))_D}{W_D}$$ which is an $\cO_{X\times X,(x,x)}-$submodule of $\frac{\cO_{X\times X,(x,x)}^{2p}}{W_D}$. 

\vspace{0,3cm}

If we call $\tilde{M}:=\pi^{-1}(M)$, then $$M=\frac{\tilde{M}}{W}\mbox{ and }M_D=\frac{\tilde{M}_D}{W_D}.$$

Rewriting with standard notation, we conclude that, if $M$ is a submodule of $\cO_{X,x}^p$ and $M\supset W$ then $$\left(\frac{M}{W} \right)_D=\frac{M_D}{W_D}$$ and is generated by $\{(h+W)_D\mbox{ / }h\in M\}$.

\section{The double homomorphism relative to an analytic map germ}

Let $(Y,y)$ and $(X,x)$ be germs of analytic spaces, and let $\varphi:(Y,y)\rightarrow(X,x)$ be an analytic map germ. So, the pullback map $\varphi^*:\cO_{X,x}\rightarrow\cO_{Y,y}$ is a ring homomorphism, which induces an $\cO_{X,x}-$algebra structure in $\cO_{Y,y}$. Thus, every $\cO_{Y,y}-$module is also an $\cO_{X,x}-$module through this ring homomorphism.

We will see that there is a natural $\cO_{X\times X,(x,x)}-$algebra structure in $\cO_{Y\times Y,(y,y)}$ induced by the pullback of $\varphi$. In fact, let $$\mu_{X,x}:\cO_{X,x}\otimes_{\bC}\cO_{X,x}\longrightarrow \cO_{X\times X,(x,x)}$$ be the $\bC-$algebra homomorphism such that $\mu_{X,x}(f\otimes_{\bC}g)$ is the germ of the map 

$$\begin{matrix}
U\times U  &  \rightarrow   &  \bC \\
(u,v)          &  \mapsto       & f(u).g(v)
\end{matrix}$$ and let $$\mu_{Y,y}:\cO_{Y,y}\otimes_{\bC}\cO_{Y,y}\longrightarrow \cO_{Y\times Y,(y,y)}$$ the same for $(Y,y)$. 

Since $\varphi^*:\cO_{X,x}\rightarrow\cO_{Y,y}$ is a ring homomorphism then we have a natural $\bC-$algebra homomorphism $$\varphi^{\otimes}: \cO_{X,x}\otimes_{\bC}\cO_{X,x}\longrightarrow \cO_{Y,y}\otimes_{\bC}\cO_{Y,y}$$ such that $\varphi^{\otimes}(f\otimes_{\bC}g)=(\varphi^*(f))\otimes_{\bC}(\varphi^*(g))$, $\forall f,g\in\cO_{X,x}$. In fact, the map
$$\begin{matrix}
\cO_{X,x}\times \cO_{X,x}  &  \longrightarrow  & \cO_{Y,y}\otimes_{\bC} \cO_{Y,y} \\
(f,g)                                     & \longmapsto       & (\varphi^*(f))\otimes_{\bC}(\varphi^*(g))       
\end{matrix}$$ is $\bC-$bilinear. So, the existence and uniqueness of $\varphi^{\otimes}$ is provided by the universal property of the tensor product. It is known that $\mu_{X,x}$ and $\mu_{Y,y}$ are $\bC-$algebra isomorphisms, so we can consider the $\bC-$algebra homomorphism $\epsilon_{\varphi}:\cO_{X\times X,(x,x)}\rightarrow \cO_{Y\times Y,(y,y)}$ such that the following diagram is commutative:

$$\begin{matrix}
                             &                                                          &         \mu_{X,x}          &                                      &                \\
                             &  \cO_{X,x}\otimes_{\bC}\cO_{X,x}    &     \longrightarrow      & \cO_{X\times X,(x,x)}   &                \\
\varphi^{\otimes}  & \downarrow                                       &                                   & \downarrow                  & \epsilon_{\varphi} \\        
                             &  \cO_{Y,y}\otimes_{\bC}\cO_{Y,y}    &     \longrightarrow      & \cO_{Y\times Y,(y,y)}     &                \\
                             &                                                          &         \mu_{Y,y}          &                                      &                
\end{matrix}$$

Since $\mu_{X,x}$ and $\mu_{Y,y}$ are $\bC-$algebra isomorphisms then we can identify $\epsilon_{\varphi}\cong\varphi^{\otimes}$, and $\varphi^{\otimes}:\cO_{X\times X,(x,x)}\rightarrow \cO_{Y\times Y,(y,y)}$ induces in $\cO_{Y\times Y,(y,y)}$ an $\cO_{X\times X,(x,x)}-$algebra structure.  

\begin{lemma}\label{3.L23}
Let $\alpha\in\cO_{X\times X,(x,x)}$. Suppose that $U$ is an open subset of $X$ containing $x$ where a representative of $\alpha$ is defined on $U\times U$. For each $w\in U$ let $\alpha^w\in\cO_{X,x}$ be the germ of the map 
$$\begin{matrix}
\alpha^w:  &U & \rightarrow  &  \bC \\
                & z & \mapsto  &  \alpha(z,w)
\end{matrix}$$

\noindent For each $y_2\in \varphi^{-1}(U)$ let $(\varphi^{\otimes}(\alpha))^{y_2}\in\cO_{Y,y}$ be the germ of the map 
$$\begin{matrix}
(\varphi^{\otimes}(\alpha))^{y_2}:  & \varphi^{-1}(U) & \rightarrow  &  \bC \\
                                                      & y_1 & \mapsto  &  (\varphi^{\otimes}(\alpha))(y_1,y_2)

\end{matrix}$$

Then $$\varphi^*(\alpha^{\varphi(y_2)})=(\varphi^{\otimes}(\alpha))^{y_2}, \forall y_2\in\varphi^{-1}(U).$$
\end{lemma}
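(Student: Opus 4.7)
The plan is to evaluate both sides at an arbitrary test point $y_1\in\varphi^{-1}(U)$ and compare values in $\bC$. Since $\varphi^{*}$ is precomposition with $\varphi$, the left-hand side at $y_1$ equals $\alpha^{\varphi(y_2)}(\varphi(y_1))=\alpha(\varphi(y_1),\varphi(y_2))$, while the right-hand side at $y_1$ is by definition $\varphi^{\otimes}(\alpha)(y_1,y_2)$. Thus the lemma reduces to establishing the pointwise identity
$$\varphi^{\otimes}(\alpha)(y_1,y_2)\;=\;\alpha(\varphi(y_1),\varphi(y_2))$$
for every $\alpha\in\cO_{X\times X,(x,x)}$ and every $(y_1,y_2)$ in a neighborhood of $(y,y)$. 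In other words, one must show that $\varphi^{\otimes}$, transported via the identifications $\mu_{X,x}$ and $\mu_{Y,y}$, coincides with the pullback by $\varphi\times\varphi$.

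The second step is to verify this pointwise identity on elementary tensors. Fix $f,g\in\cO_{X,x}$ and take $\alpha=\mu_{X,x}(f\otimes_{\bC}g)$, whose representative is $(u,v)\mapsto f(u)g(v)$. By the definition of $\varphi^{\otimes}$ on simple tensors and the commutative diagram defining $\epsilon_{\varphi}$ (under the identification $\epsilon_{\varphi}\cong\varphi^{\otimes}$),
$$\varphi^{\otimes}(\alpha)\;=\;\mu_{Y,y}\bigl(\varphi^{*}(f)\otimes_{\bC}\varphi^{*}(g)\bigr),$$
so its representative is $(y_1,y_2)\mapsto \varphi^{*}(f)(y_1)\,\varphi^{*}(g)(y_2)=f(\varphi(y_1))g(\varphi(y_2))$. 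On the other hand, $\alpha(\varphi(y_1),\varphi(y_2))=f(\varphi(y_1))g(\varphi(y_2))$. Hence the two sides agree on every elementary tensor.

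Both sides of the pointwise identity are $\bC$-linear in $\alpha$, so agreement extends from elementary tensors to every finite sum of them, i.e.\ to every element in the image of $\mu_{X,x}$. Invoking the statement asserted in the paragraph preceding the lemma, that $\mu_{X,x}$ is a $\bC$-algebra isomorphism, this image is all of $\cO_{X\times X,(x,x)}$, and the identity therefore holds for arbitrary $\alpha$. Reading the resulting equality back through the two reductions at the start gives $\varphi^{*}(\alpha^{\varphi(y_2)})=(\varphi^{\otimes}(\alpha))^{y_2}$ as germs at $y$, for every $y_2\in\varphi^{-1}(U)$.

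The main obstacle is the last reduction: to pass from agreement on pure tensors to agreement on all of $\cO_{X\times X,(x,x)}$, one genuinely needs the surjectivity of $\mu_{X,x}$ (or an equivalent approximation/density principle), since in the analytic setting a typical germ at $(x,x)$ is not literally a finite sum $\sum_i f_i(u)g_i(v)$. Taking for granted the paper's assertion that $\mu_{X,x}$ is an isomorphism, as used throughout the construction of $\varphi^{\otimes}$, the argument closes and only routine bookkeeping with representatives on the neighborhood $U$ remains.
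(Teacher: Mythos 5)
Your proposal is correct and follows essentially the same route as the paper: the paper's proof also writes $\alpha=\sum_i f_i\otimes_{\bC}g_i$ (implicitly invoking the surjectivity of $\mu_{X,x}$, exactly the point you flag) and then checks the identity by evaluating both germs at an arbitrary $y_1\in\varphi^{-1}(U)$, which is your pointwise computation on elementary tensors extended by linearity. Your explicit remark that the reduction to finite sums of pure tensors is the only nontrivial ingredient matches what the paper takes for granted, so nothing is missing relative to its argument.
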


\begin{proof}
We can write $\alpha=\sum(f_i\otimes_{\bC}g_i)$, with $f_i,g_i\in\cO_{X,x}$. For all $y_1\in\varphi^{-1}(U)$ we have:

$\varphi^*(\alpha^{\varphi(y_2)})(y_1)=\alpha^{\varphi(y_2)}(\varphi(y_1))=\alpha(\varphi(y_1),\varphi(y_2))=\sum(f_i(\varphi(y_1))\otimes_{\bC}g_i(\varphi(y_2)))=\left(\sum(\varphi^*(f_i))\otimes_{\bC}((\varphi^*(g_i))\right)(y_1,y_2)=(\varphi^{\otimes}(\alpha))(y_1,y_2)=(\varphi^{\otimes}(\alpha))^{y_2}(y_1)$, and the lemma is proved.
\end{proof}

Clearly we get the analogous result if we fix the first coordinate instead the second one. 

\vspace{0,5cm}

Consider the projections $\pi_1^{X},\pi_2^{X}:X\times X\rightarrow X$ and $\pi_1^{Y},\pi_2^{Y}:Y\times Y\rightarrow Y$.

\vspace{0,5cm}

\begin{theorem}\label{3.T24}
Let $M\subset\cO_{X,x}^{p}$ and $N\subset\cO_{Y,y}^q$ be submodules. If $\phi: M\rightarrow N$ is an $\cO_{X,x}-$module homomorphism then there exists a unique $\cO_{X\times X,(x,x)}-$module homomorphism $\phi_{D,\varphi}=\phi_D: M_D\rightarrow N_D$ such that $\phi_D(h_D)=(\phi(h))_D$, $\forall h\in M$.

The map $\phi_{D,\varphi}=\phi_D$ is called the double of $\phi$ relative to $\varphi:(Y,y)\rightarrow(X,x)$.
\end{theorem}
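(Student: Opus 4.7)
The strategy is to mimic the proof of Theorem \ref{3.T3}, with the novelty that scalars in $\cO_{X\times X,(x,x)}$ must now be transported to $\cO_{Y\times Y,(y,y)}$ via $\varphi^{\otimes}$, and Lemma \ref{3.L23} serves as the precise bridge making this transport compatible with coordinate-wise slicing. Concretely, I define $\phi_D$ on the generating set of $M_D$ by the natural formula
$$\phi_D\Bigl(\sum_i \alpha_i (h_i)_D\Bigr) := \sum_i \varphi^{\otimes}(\alpha_i)\,(\phi(h_i))_D,$$
with $\alpha_i\in\cO_{X\times X,(x,x)}$ and $h_i\in M$; the right-hand side lies in $N_D\subset\cO_{Y\times Y,(y,y)}^{2q}$, where the action of $\alpha_i$ on $(\phi(h_i))_D$ is taken through $\varphi^{\otimes}$, in keeping with the induced $\cO_{X\times X,(x,x)}$-algebra structure on $\cO_{Y\times Y,(y,y)}$.

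The decisive step is well-definedness. Given a relation $\sum_i\alpha_i(h_i)_D=\sum_j\beta_j(g_j)_D$ in $M_D$, I project onto the two blocks associated with $\pi_1^X,\pi_2^X$ and, exactly as in the proof of Theorem \ref{3.T3}, freeze the second $X$-coordinate at an arbitrary $w$ in a common neighborhood $U$ of $x$; this yields $\sum_i\alpha_i^w h_i=\sum_j\beta_j^w g_j$ in $\cO_{X,x}^p$. Applying the $\cO_{X,x}$-linear map $\phi$ produces the analogous identity in $N\subset\cO_{Y,y}^q$, where the scalar action is understood via $\varphi^{*}$. Specializing $w=\varphi(y_2)$ for each $y_2\in\varphi^{-1}(U)$ and invoking Lemma \ref{3.L23} converts the coefficient $\varphi^{*}(\alpha_i^{\varphi(y_2)})$ into the $Y$-slice $(\varphi^{\otimes}(\alpha_i))^{y_2}$, and similarly for the $\beta_j$. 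Evaluating at an arbitrary $y_1\in\varphi^{-1}(U)$ and unwinding the slices reconstitutes exactly the $\pi_1^Y$-component of $\sum_i\varphi^{\otimes}(\alpha_i)(\phi(h_i))_D$; the matching $\pi_2^Y$-component is obtained symmetrically by freezing the first coordinate instead. So the two candidate values for $\phi_D$ coincide in $N_D$.

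Once well-definedness is secured, $\cO_{X\times X,(x,x)}$-linearity of $\phi_D$ falls out immediately from the defining formula combined with the ring homomorphism property of $\varphi^{\otimes}$, and uniqueness is automatic because the required identity $\phi_D(h_D)=(\phi(h))_D$ already pins $\phi_D$ down on the generating set $\{h_D\mid h\in M\}$ of $M_D$. The principal obstacle is not computational but organizational: one must track, at each stage, which ring acts on which module, since $\phi$ is $\cO_{X,x}$-linear but its source sits in $\cO_{X,x}^p$ while its target sits in $\cO_{Y,y}^q$ with the $\cO_{X,x}$-action pulled back along $\varphi^{*}$, and the analogous compatibility must hold simultaneously in both coordinates when passing from $X\times X$ to $Y\times Y$. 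Lemma \ref{3.L23} is precisely the tool that converts a ``fix the second $X$-coordinate then pull back by $\varphi$'' operation into a ``fix the second $Y$-coordinate of $\varphi^{\otimes}(\alpha)$'' operation, and this is what makes both coordinate compatibilities fit together.
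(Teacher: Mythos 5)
Your proposal is correct and follows essentially the same line of argument as the paper: define $\phi_D$ on generators via $\varphi^{\otimes}$, verify well-definedness by slicing a relation in each coordinate, freezing the second $X$-coordinate at $w=\varphi(y_2)$, applying $\phi$, and invoking Lemma~\ref{3.L23} (and its symmetric analogue) to identify the resulting $Y$-slices, with linearity and uniqueness then following immediately from the defining formula. There is no meaningful deviation from the paper's proof.
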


\begin{proof}
Since $M_D$ is generated by $\{h_D \mbox{ / }h\in M\}$ then we can define $\phi_D: M_D\rightarrow N_D$ in a natural way: for each $u=\sum\limits_i\alpha_i(h_i)_D$ with $\alpha_i\in\cO_{X\times X,(x,x)}$ and $h_i\in M$ we define $$\phi_D(u):=\sum\limits_i\alpha_i(\phi(h_i))_D=\sum\limits_i\varphi^{\otimes}(\alpha_i)(\phi(h_i))_D$$ which belongs to $N_D$.

{\bf Claim : $\phi_D$ is well defined.}
In fact, suppose that $\sum\limits_i\alpha_i(h_i)_D=\sum\limits_j\beta_j(g_j)_D$, with $\alpha_i,\beta_j\in\cO_{X\times X,(x,x)}$ and $h_i,g_j\in M$. So, we get two equations:

$$\sum\limits_i\alpha_i(h_i\circ\pi_1^{X})=\sum\limits_j\beta_j(g_j\circ\pi_1^{X}) \eqno (1)$$

$$\sum\limits_i\alpha_i(h_i\circ\pi_2^{X})=\sum\limits_j\beta_j(g_j\circ\pi_2^{X}). \eqno (2)$$

Take $U$ an open neighborhood of $x$ in $X$ where $\alpha_i,\beta_j$ are defined on $U\times U$, and $h_i,g_j$ are defined on $U$. For each $w\in U$ define $\alpha_i^w,\beta_j^w\in\cO_{X,x}$ given by the germs of the maps 
$$\begin{matrix}
\alpha_i^w: & U  &  \longrightarrow  &  \bC                 &  \mbox{          }  &  \beta_j^w:  &  U  &  \longrightarrow  &  \bC  \\
                   &  z  &  \longmapsto      &  \alpha_i(z,w)  &  \mbox{          }  &                   &  z   &  \longrightarrow  &  \beta_j(z,w)
\end{matrix}$$

\vspace{0,3cm}

The equation (1) implies that $\sum\limits_i\alpha_i^{w}h_i=\sum\limits_j\beta_j^{w}g_j$, $\forall w \in U$. Applying $\phi$ (which is a $\cO_{X,x}-$homomorphism) in both sides of the last equation we get $$\sum\limits_i\alpha_i^{w}\phi(h_i)=\sum\limits_j\beta_j^{w}\phi(g_j), \forall w\in U.$$

By the $\cO_{X,x}-$module structure on $N$ induced by $\varphi^*$, the last equation boils down to $$\sum\limits_i\varphi^*(\alpha_i^{w})\phi(h_i)=\sum\limits_j\varphi^*(\beta_j^{w})\phi(g_j), \forall w\in U.$$

By Lemma \ref{3.L23} we conclude that $$\sum\limits_i(\varphi^{\otimes}(\alpha_i))^{y_2}\phi(h_i)=\sum\limits_j(\varphi^{\otimes}(\beta_j))^{y_2}\phi(g_j), \forall y_2\in \varphi^{-1}(U).$$

Hence, $$\sum\limits_i\varphi^{\otimes}(\alpha_i)(\phi(h_i)\circ\pi_1^{Y})=\sum\limits_j\varphi^{\otimes}(\beta_j)(\phi(g_j)\circ\pi_1^{Y}).$$

Working with the analogous result of the Lemma \ref{3.L23}, the equation (2) implies that

$$\sum\limits_i\varphi^{\otimes}(\alpha_i)(\phi(h_i)\circ\pi_2^{Y})=\sum\limits_j\varphi^{\otimes}(\beta_j)(\phi(g_j)\circ\pi_2^{Y}).$$

Therefore, $$\sum\limits_i\varphi^{\otimes}(\alpha_i)(\phi(h_i))_D=\sum\limits_j\varphi^{\otimes}(\beta_j)(\phi(g_j))_D$$ and $\phi_D$ is well-defined.

Now, by the definition of $\phi_D$, it is clear that $\phi_D$ is an $\cO_{X\times X,(x,x)}-$module homomorphism and is the unique satisfying the property $\phi_D(h_D)=(\phi(h))_D$, $\forall h\in M$, i.e, $$\phi_D(h\circ\pi_1^{X},h\circ\pi_2^{X})=(\phi(h)\circ\pi_1^{Y},\phi(h)\circ\pi_2^{Y}).$$

\end{proof}

Notice that this approach generalizes what we have defined in Section \ref{sec1}, taking $\varphi:(X,x)\rightarrow(X,x)$ as the identidy map. The main motivation of this approach is the fact that when we work with integral closure of modules, the analytic curves $\varphi:(\bC,0)\rightarrow(X,x)$ has a key role.

Clearly the Propositions \ref{3.P4}, \ref{3.P9} (a,c) and the Corollary \ref{3.C5} (a,b,d) still hold for the double homomorphism relative to an analytic map.

We can write the Proposition \ref{3.P9} (b) on this new language as follows:

\begin{proposition}\label{3.P25}
Let $\varphi:(Y,y)\rightarrow(X,x)$ and $\varphi':(Z,z)\rightarrow(Y,y)$ be analytic map germs, $M\subset\cO_{X,x}^{p}$, $N\subset\cO_{Y,y}^{q}$ and $P\subset\cO_{Z,z}^{r}$ submodules. Let $\phi:M\rightarrow N$ be an $\cO_{X,x}-$module homomorphism and $\phi':N\rightarrow P$ be an $\cO_{Y,y}-$module homomorphism. Then, $\phi'\circ\phi:M\rightarrow P$ is an $\cO_{X,x}-$module homomorphism, considering $P$ with the $\cO_{X,x}-$module structure induced by the pullback of $\varphi\circ\varphi':(Z,z)\rightarrow(X,x)$ and $$(\phi'\circ\phi)_{D,\varphi\circ\varphi'}=\phi'_{D,\varphi'}\circ\phi_{D,\varphi}.$$

\end{proposition}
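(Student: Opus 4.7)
The plan is to verify the proposition in two stages: first confirm that $\phi'\circ\phi$ is a legitimate $\cO_{X,x}$-module homomorphism (so that Theorem \ref{3.T24} even applies to it, producing a relative double), and then identify both sides by the uniqueness clause of that theorem.

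For the first stage, I would check the compatibility of scalars. The $\cO_{X,x}$-structure on $N$ is given by $\varphi^{*}$ and the $\cO_{Y,y}$-structure on $P$ is given by $(\varphi')^{*}$, so the $\cO_{X,x}$-action on $P$ induced by $(\varphi\circ\varphi')^{*}=(\varphi')^{*}\circ\varphi^{*}$ is precisely $a\cdot p = (\varphi')^{*}(\varphi^{*}(a))\,p$. Then for $a\in\cO_{X,x}$ and $h\in M$, I would unfold $(\phi'\circ\phi)(ah)=\phi'\bigl(\varphi^{*}(a)\phi(h)\bigr)=(\varphi')^{*}(\varphi^{*}(a))\phi'(\phi(h))$, which exhibits $\phi'\circ\phi$ as $\cO_{X,x}$-linear.

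For the equality of the doubles, I would first observe the functoriality $(\varphi\circ\varphi')^{\otimes}=(\varphi')^{\otimes}\circ\varphi^{\otimes}$, checked directly on elementary tensors $f\otimes_{\bC}g\mapsto (\varphi\circ\varphi')^{*}(f)\otimes_{\bC}(\varphi\circ\varphi')^{*}(g)$. Consequently the $\cO_{X\times X,(x,x)}$-algebra structure induced on $\cO_{Z\times Z,(z,z)}$ by $\varphi\circ\varphi'$ factors through the one induced on $\cO_{Y\times Y,(y,y)}$ by $\varphi$. This makes the composition $\phi'_{D,\varphi'}\circ\phi_{D,\varphi}$ into an $\cO_{X\times X,(x,x)}$-module homomorphism from $M_D$ to $P_D$: the inner map is $\cO_{X\times X,(x,x)}$-linear by Theorem \ref{3.T24}, and the outer map is $\cO_{Y\times Y,(y,y)}$-linear, which becomes $\cO_{X\times X,(x,x)}$-linearity via $\varphi^{\otimes}$.

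Finally, since $M_D$ is generated as an $\cO_{X\times X,(x,x)}$-module by the elements $\{h_D : h\in M\}$, the uniqueness assertion of Theorem \ref{3.T24} reduces the equality $(\phi'\circ\phi)_{D,\varphi\circ\varphi'}=\phi'_{D,\varphi'}\circ\phi_{D,\varphi}$ to checking it on these generators. On the left, $(\phi'\circ\phi)_{D,\varphi\circ\varphi'}(h_D)=((\phi'\circ\phi)(h))_{D}=(\phi'(\phi(h)))_{D}$; on the right, $\phi'_{D,\varphi'}(\phi_{D,\varphi}(h_D))=\phi'_{D,\varphi'}((\phi(h))_D)=(\phi'(\phi(h)))_{D}$. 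Both agree, which completes the plan. I do not expect a serious obstacle here; the only subtle point is the identification $(\varphi\circ\varphi')^{\otimes}=(\varphi')^{\otimes}\circ\varphi^{\otimes}$, needed to make sense of the composition of two relative doubles as a single $\cO_{X\times X,(x,x)}$-homomorphism, and this is a formal consequence of the universal property of the tensor product already used in Section \ref{sec1}.
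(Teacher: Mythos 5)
Your proposal is correct and follows essentially the same route as the paper: first verify $\cO_{X,x}$-linearity of $\phi'\circ\phi$ via the chain of scalar restrictions, then identify the two doubles on the generators $h_D$. The paper compresses your entire second stage (the functoriality $(\varphi\circ\varphi')^{\otimes}=(\varphi')^{\otimes}\circ\varphi^{\otimes}$, the $\cO_{X\times X,(x,x)}$-linearity of the composite, and the appeal to uniqueness in Theorem~\ref{3.T24}) into the single word ``clearly,'' so your write-up is a useful unpacking of that step rather than a different argument.
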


\begin{proof}
For all $\alpha\in\cO_{X,x}$ and $h\in M$, working with the module structures induced by the pullbacks of the analytic map germs, we have:

$\phi'\circ\phi(\alpha h)=\phi'(\alpha\phi(h))=\phi'(\varphi^*(\alpha)\phi(h))=\varphi^*(\alpha)\phi'(\phi(h))=\varphi'^*(\varphi^*(\alpha))(\phi'\circ\phi(h))=(\varphi\circ\varphi')^*(\alpha)(\phi'\circ\phi(h))=\alpha(\phi'\circ\phi(h))$. So $\phi'\circ\phi:M\rightarrow P$ is an $\cO_{X,x}-$module homomorphism and $(\phi'\circ\phi)_{D,\varphi\circ\varphi'}$ is well defined and clearly is equal to $\phi'_{D,\varphi'}\circ\phi_{D,\varphi}$.
\end{proof}

\vspace{2cm}

\begin{small}
	
	{\sc Thiago Filipe da Silva
		
		Instituto de Ci\^encias Matem\'aticas e de Computa\'c\~ao - USP \\
		Av. Trabalhador S\~ao Carlense, 400 - Centro, 13566-590 - S\~ao Carlos - SP, Brazil, thiago.filipe@usp.br
		
		\vspace{0,5cm}
		
		Departamento de Matem\'atica, Universidade Federal do Esp\'irito Santo \\
		Av. Fernando Ferrari, 514 - Goiabeiras, 29075-910 - Vit\'oria - ES, Brazil, thiago.silva@ufes.br}
	
	\vspace{1cm}

\end{small}


\begin{thebibliography}{99}
	
\bibitem{SGP} {\sc T. da Silva, N. Grulha and M. Pereira}, {\it The Bi-Lipschitz Equisingularity of Essentially Isolated Determinantal Singularities}, arXiv:1705.07180 [math.AG] (2017).

\bibitem{GT} {\sc T. Gaffney and T. da Silva}, {\it The Lipschitz Saturation of a Module}, Work in progress (2017).

\bibitem{G1} {\sc T. Gaffney}, {\it The genericity of the infinitesimal Lipschitz condition for hypersurfaces}, J. Singul. 10, 108-123 (2014). 

\bibitem{G2} {\sc T. Gaffney}, {\it Bi-Lipschitz equivalence, integral closure and invariants,} Proceedings of the 10th International Workshop on Real and Complex Singularities. Edited by: M. Manoel, Universidade de S\~ao Paulo, M. C. Romero Fuster, Universitat de Valencia, Spain, C. T. C. Wall, University of Liverpool, London Mathematical Society Lecture Note Series (No.380) November (2010).

\bibitem{GR} {\sc R. C. Gunning and H. Rossi}, {\it Analytic Functions of Several Complex Variables}, AMS Chelsea Publishing, vol. 368 (1965).

\bibitem{M1} {\sc T. Mostowski}, {\it A criterion for Lipschitz equisingularity.} Bull. Polish Acad. Sci. Math. 37 (1989), no. 1-6, 109-116 (1990).

\bibitem{PT} {\sc F. Pham and B. Teissier}, {\it Fractions lipschitziennes d'une alg\`ebre analytique complexe et saturation de Zariski}, Centre Math. l'\' Ecole Polytech., Paris, (1969). 

\bibitem{Pham} {\sc F. Pham}, {\it Fractions lipschitziennes et saturation de Zariski des alg\`ebres analytiques complexes}. Expos\'e d'un travail fait avec Bernard Teissier. Fractions lipschitziennes d'une alg\`ebre analytique complexe et saturation de Zariski, Centre Math. l\'Ecole Polytech., Paris, 1969. Actes du Congr\`es International des Math\'ematiciens (Nice, 1970), Tome 2, pp. 649-654. Gauthier-Villars, Paris, (1971).

\bibitem{R} {\sc J. Rotman}, {\it An Introduction to Homological Algebra}, Springer (2009).

\bibitem{Za} {\sc O. Zariski}, {\it General theory of saturation and of saturated local rings. II. Saturated local rings of dimension 1}. Amer. J. Math. 93, 872-964 (1971).

\end{thebibliography}
\end{document}